\newtheorem{theorem}{Theorem}
\newtheorem{definition}{Definition}
\newtheorem{lemma}{Lemma}
\newtheorem{proposition}{Proposition}
\newtheorem{corollary}{Corollary}
\theoremstyle{definition}
\newtheorem{remark}{Remark}
\newcommand{\G}{{\mathbf G}}
\newcommand{\GSL}{{\mathbf{SL}}}
\newcommand{\GSp}{{\mathbf{Sp}}}
\newcommand{\ZZ}{{\mathbb{Z}}}
\newcommand{\CC}{{\mathbb{C}}}
\newcommand{\TT}{{\mathbb{T}}}
\newcommand{\FF}{{\mathbb{F}}}
\newcommand{\GL}{\mathrm{GL}}
\newcommand{\SL}{\mathrm{SL}}
\newcommand{\Sp}{{\mathrm{Sp}}}
\newcommand{\Z}[1]{\mathbb{Z}/(#1\mathbb{Z})}
\newcommand{\Alt}{\mathrm{Alt}}
\newcommand{\p}{{\mathrm{pf}}}
\newcommand{\inn}[2]{\langle #1, #2 \rangle}
\newcommand{\pra}[1]{\left( #1 \right)}
\newcommand{\of}{\subseteq}
\newcommand{\sgn}{\mathrm{sgn}}
\newcommand{\pfunction}[4]{
\begin{cases} #1 & \textrm{if } \, #2 \\
#3 & \textrm{ } \, #4
\end{cases}}
\newcommand{\ppfunction}[6]{
\begin{cases} #1 & \textrm{if } \, #2 \\
#3 & \textrm{if } \, #4 \\ #5 & \textrm{if } #6 
\end{cases}}
\begin{document}
\title{Quasi-Random profinite groups}
\author{Mohammad Bardestani}
\address{Mohammad Bardestani, D\'{e}partment de Math\'{e}matiques et Statistique,Universit\'{e} de Montr\'{e}al, CP 6128, succ.
Centre-ville, Montr\'{e}al, QC, Canada H3C 3J7  \newline {\textrm current address:}
Department of Mathematics and Statistics, University of Ottawa, 585 King Edward, Ottawa, ON K1N 6N5, Canada.}
\email{ mbardest@uottawa.ca   }
\author{Keivan Mallahi-Karai}
\address{Keivan Mallahi-Karai, Jacobs University Bremen, Campus Ring I, 28759 Bremen, Germany.}
\email{k.mallahikarai@jacobs-university.de }
\begin{abstract}
Inspired by Gowers' seminal paper~\cite{Gowers}, we will investigate quasi-randomness for profinite groups. We will
obtain bounds for the mininal degree of non-trivial representations of $\SL_k( \Z{p^n})$ and 
$\Sp_{2k}(\Z{p^n})$. Our method also delivers a lower bound for the minimal degree of a faithful representation 
of these groups. Using the suitable machinery from functional analysis, we establish exponential lower and upper
bounds for the supremal measure of a product-free measurable subset of the profinite groups $\SL_{k}({\ZZ_p})$ and 
$\Sp_{2k}(\ZZ_p)$. We also obtain analogous bounds for a special subgroup of the automorphism group of a regular tree.
\end{abstract}
\keywords{Profinite groups, Complex representations, Hilbert-Schmidt operators, Singular value decomposition.}
\subjclass[2010]{20P05,20F,20C33}
\maketitle

\section{Introduction}

A subset $A$ of a group $G$ is called product-free if the equation $xy = z$ has no solution with $x,y,z \in A$. Babai and S\'{o}s~\cite{Sos} asked if every finite group $G$ has a product-free subset of size at least $c|G|$ for a universal constant $c>0$. This question was answered negatively by Gowers in his paper on quasi-random groups~\cite{Gowers} where he proved that for sufficiently large prime $p$, the group $G=\mathrm{PSL}_2(\mathbb{F}_p)$ has no product-free subset of size $cn^{8/9}$, where $n$ is the order of $G$. A feature of this group that plays an essential
role in the proof is that the minimal degree of a non-trivial representation of $G$ is $O(p)$. This property of 
$G$, called quasi-randomness by Gowers, is due to Frobenius and has been generalized by Landazuri and Seitz~\cite{Landazuri} to other families of finite simple
groups of Lie type.

Apart from its intrinsic interest, this theorem has found several important applications. To name a few, Nikolov and Pyber~\cite{pyber}, used Gowers' theorem to obtain an improved version of a recent theorem of Helfgott~\cite{helf} and Shalev~\cite{shalev} on product decompositions of finite simple groups. Gowers' method has also been used in studying the image of the word maps on finite simple groups~\cite{shalev2,shalev3}.

The focus of this paper will be quasi-randomness for compact groups and, more specifically, profinite groups. We will be 
interested in the family $\G( \ZZ/(p^n \ZZ))$ where $\G$ is either the special linear or symplectic group.
Our goal is to establish a lower bound on the minimal degree of all non-trivial representations and also the minimal 
degree of the faithful representations of these groups. We will then introduce the functional analytic ingredients needed to carry over Gowers' argument from finite to compact groups. These together establish that the supremal measure of a product-free set in 
these groups has an exponential rate of decay (as a function of rank) with explicit lower and upper bounds for the exponential rate. In the same vain, we prove an analogous  result for a group of the automorphisms of the $k$-regular trees. Interestingly, in this case, the bounds are of the form $O(k^c)$ for a constant $c$. Let us first set the notations and definitions.

\begin{definition}
For a group $G$, the smallest degree among all non-trivial complex representations of $G$ will be denoted by $m(G)$. In other words,
\begin{equation}
m(G):=\min_{\rho\neq 1}d_\rho,
\end{equation}
where the minimum is taken over all non-trivial representations of $G$, and $d_\rho$ denotes for degree of the representation $\rho$. We say that $G$ is $k$-{\it quasi-random} if $m(G)\ge k$.
Similarly, we will denote 
\[ m_f(G):=\min_{\ker \rho = \{1 \} }d_\rho, \]
where the minimum is taken over the set of all faithful representations of $G$.
\end{definition} 
\begin{remark}
When $G$ is a topological group, all the representations are taken to be continuous. 
\end{remark}
Our first theorem gives a lower bound for the minimal degree of all non-trivial representations of several classes of classical groups over rings $\Z{p^n}$ for $p\geq 3$. This indeed extends previous work of Landazuri and Seitz, which corresponds to the case $n=1$ in this theorem.
\begin{theorem}\label{minimal-degree}
For every group ${\mathbf G}$ in the first column of the table below
 the function on the second column (the third column, respectively) gives a lower bound for the degree of any non-trivial representation (any faithful representation, respectively)
of the group ${\mathbf G}(\Z{p^n})$. In other words,
\[ m( \G( \Z{p^n})) \ge h(\G,p), \quad  m_f( \G( \Z{p^n})) \ge h_f(\G,p,n). \]
In particular, $\G(\Z{p^n})$ is $O(p^r)$-quasi-random, where $r$ is the rank of $\G$.
\[
\begin{array}{||c|c|c||}
\hline
 {\mathbf G} & h({\mathbf G},p) & h_f({\mathbf G},p,n)     \\
 \hline
 \GSL_2 &  \frac{1}{2}(p-1)   &  \frac{1}{2}(p^n-p^{n-1})   \\
 \hline
 \GSL_k & p^{k-1}-p^{k-2}  &  (p^n-p^{n-1})p^{(k-2)n} \\   
 \hline
 \GSp_{2k} &  \frac{1}{2}(p-1)p^{k-1} & \frac{1}{2}(p^n-p^{n-1})p^{(k-1)n} \\
 \hline
\end{array}
\]

\end{theorem}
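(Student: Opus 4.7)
The plan is to exploit the congruence filtration of $\G(\Z{p^n})$ together with Clifford's theorem, reducing both degree estimates to orbit computations in the (co)adjoint representation of $\G(\FF_p)$. Set $K_j := \ker\bigl(\G(\Z{p^n}) \to \G(\Z{p^j})\bigr)$ for $0 \le j \le n$, so that $K_0 = G$ and $K_n = \{1\}$. The map $I + p^j X \mapsto X$ identifies $K_j/K_{j+1}$ with the additive group of $\mathfrak{g}(\FF_p)$ for $j \ge 1$, and the conjugation action of $G$ on $K_j/K_{j+1}$ factors through $\G(\FF_p)$ and coincides with its adjoint representation. Moreover, a direct expansion of commutators yields $[K_s, K_t] \subseteq K_{s+t}$, which in particular makes $K_{\lceil n/2 \rceil}$ abelian.

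To prove the bound on $m(\G(\Z{p^n}))$, take a non-trivial irreducible $\rho$ and let $j \ge 1$ be minimal with $\rho|_{K_j}$ trivial. When $j = 1$ the representation factors through $\G(\FF_p)$ and the Landazuri--Seitz theorem~\cite{Landazuri} supplies the three values in the second column. When $j \ge 2$, Clifford's theorem applied to the normal abelian subgroup $K_{j-1}/K_j$ of $G/K_j$ gives $\dim \rho \ge |\mathcal{O}_\chi|$, where $\chi$ is a non-trivial character of $K_{j-1}/K_j$ appearing in $\rho|_{K_{j-1}}$ and $\mathcal{O}_\chi$ is its $G$-orbit. Because $[K_1, K_{j-1}] \subseteq K_j$, this orbit is simply a $\G(\FF_p)$-orbit on $\widehat{\mathfrak{g}(\FF_p)}$, which via the trace pairing (respectively the invariant symplectic form) identifies with an adjoint orbit in $\mathfrak{g}(\FF_p)$. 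The remaining work is a centralizer estimate: for $\GSL_k$ the minimum non-trivial orbit is realised by a rank-one trace-zero matrix, of size $(p^k-1)(p^{k-1}-1)/(p-1) \ge p^{k-1} - p^{k-2}$, and the $\GSp_{2k}$ case is analogous. The mild complication $p \mid k$ for $\GSL_k$, where the trace pairing degenerates on the scalar line, is handled by a separate argument ruling out the scalar-valued characters that arise there.

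The faithful bound $m_f(\G(\Z{p^n})) \ge h_f(\G,p,n)$ requires a refinement, because the orbit analysis at level $n-1$ factors through $\G(\FF_p)$ and is bounded by $|\G(\FF_p)|$, which is too small for large $n$. The strategy is instead to apply Clifford at level $j = \lceil n/2 \rceil$: the subgroup $K_j$ is abelian, while the $G$-action on $\widehat{K_j}$ factors only through the much larger quotient $\G(\Z{p^{\lfloor n/2 \rfloor}})$. The key step is to use the faithfulness of $\rho$ to locate a character $\chi$ of $K_j$ appearing in $\rho|_{K_j}$ whose stabiliser in $\G(\Z{p^{\lfloor n/2 \rfloor}})$ is a maximal torus (in the sense of Kirillov's orbit method); otherwise $\rho$ would vanish on a non-trivial normal subgroup lying in the filtration, contradicting faithfulness. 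The resulting orbit has exactly the size $h_f(\G,p,n)$, the factor $p^n - p^{n-1}$ coming from the action of $(\Z{p^n})^\times$ on the depth-$n$ coadjoint datum. I expect this final step---the precise formulation of the regularity condition forced by faithfulness, together with the verification that the stabiliser is a torus of the predicted size---to be the main technical hurdle.
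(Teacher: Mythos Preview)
Your approach is genuinely different from the paper's. The paper never touches the congruence filtration $K_j$; instead it restricts $\rho$ to a fixed \emph{unipotent} abelian subgroup $L$ of $\G(\Z{p^n})$ itself---the last column in $\SL_k$, the block $\{U_\sigma : \sigma^T = \sigma\}$ in $\Sp_{2k}$---decomposes $\CC^d$ into simultaneous eigenspaces for $\rho(L)$, and then counts how many distinct eigenvalue-functions (``roots'') are produced by conjugating with an explicit one-parameter family in the normaliser $H$ (the upper-left $\SL_{k-1}$, respectively $\GL_k$ acting by $\sigma \mapsto \alpha\sigma\alpha^T$). Faithfulness enters only to force $\rho(e_1)$ (respectively $\rho(G_{11})$) to have a \emph{primitive} $p^n$-th root of unity as an eigenvalue; this makes the relevant root a unit in $(\Z{p^n})^{k-1}$ and its $H$-orbit then visibly has size $\varphi(p^n)\,p^{(k-2)n}$. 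The case $\SL_2$ is a separate two-line eigenvalue count. Everything is elementary and self-contained; Landazuri--Seitz is not invoked---indeed the paper presents the theorem as an \emph{extension} of their result to $n \ge 2$, so your appeal to it for the base case $j=1$ is mildly circular, though not wrong.

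Your plan for $m(G)$ via adjoint orbits on $\mathfrak{g}(\FF_p)$ is essentially the Bourgain--Gamburd argument the paper cites and is correct. Your plan for $m_f(G)$, however, has a real gap. You assert that the relevant $G$-orbit on $\widehat{K_j}$ (with $j = \lceil n/2 \rceil$) has size $h_f(\G,p,n)$, the factor $p^n - p^{n-1}$ ``coming from the action of $(\Z{p^n})^\times$''. But the action on $\widehat{K_j}$ factors through $\G(\Z{p^{\lfloor n/2 \rfloor}})$, so only $(\Z{p^{\lfloor n/2 \rfloor}})^\times$ is visible. Concretely, for $\SL_2$ with $n$ odd the largest coadjoint orbit in $\mathfrak{sl}_2(\Z{p^{(n-1)/2}})$ has size $O(p^{n-1})$, short of $\tfrac12(p^n-p^{n-1})$ by a factor of $p$. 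Moreover, faithfulness only guarantees a character lying over some non-zero $\bar Y \in \mathfrak{g}(\FF_p)$; if $\bar Y$ is nilpotent the stabiliser is not a torus, so your regularity assertion is unjustified. In the full orbit method the missing factor for odd $n$ comes from a Heisenberg-type irreducible of the inertia group, not from the orbit itself, and your sketch does not account for this. The paper's unipotent $L$ sits inside $\G(\Z{p^n})$ rather than in a congruence quotient, so $(\Z{p^n})^\times$ acts on the roots directly and $\varphi(p^n)$ appears with no parity bookkeeping.
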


It is worth mentioning that Bourgain and Gamburd~\cite{Bourgain1} used a theorem of Clifford to find the following lower bound for 
$m_f(\SL_2(\Z{p^n})$:
\begin{equation}\label{BGC}
m_f(\SL_2(\Z{p^n}))\geq \frac{p^{n-2}(p^2-1)}{2}.
\end{equation}
Even though our bound is slightly weaker than the one obtained in \cite{Bourgain1}, it is
asymptotically equivalent to that. Our method is short and elementary and has also the advantage 
that it can be easily adapted for other classes of Chevalley groups. 

As any continuous finite dimensional representation of a profinite group factors through a finite quotient, the theorem can be rephrased as a statement about the profinite groups:

\begin{theorem}\label{minimal degree profinite}
Let $\G$ be one of the groups listed in the table above and $\G( \ZZ_p)$ denote the 
compact group of $p$-adic points of $\G$. Then the degree of any non-trivial continuous representation 
of $\G(\ZZ_p)$ is at least $h({\G},p)$.
\end{theorem}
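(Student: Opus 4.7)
The plan is to deduce this from Theorem~\ref{minimal-degree} by showing that every non-trivial continuous finite-dimensional complex representation of $\G(\ZZ_p)$ factors through one of the finite quotients $\G(\Z{p^n})$.

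First, I would recall the definition $\G(\ZZ_p)=\varprojlim_n \G(\Z{p^n})$, where the inverse limit is taken with respect to the reduction maps modulo $p^n$. As a consequence, the principal congruence subgroups $\Gamma(p^n):=\ker\bigl(\G(\ZZ_p)\to \G(\Z{p^n})\bigr)$ form a neighborhood basis of the identity consisting of open (and necessarily normal, finite-index) subgroups of $\G(\ZZ_p)$.

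Next, let $\rho\colon \G(\ZZ_p)\to \GL_d(\CC)$ be a non-trivial continuous representation and set $K=\ker\rho$. The key ingredient is the no-small-subgroups property of the Lie group $\GL_d(\CC)$: there is an open neighborhood $U$ of the identity in $\GL_d(\CC)$ which contains no non-trivial subgroup. (One may take $U$ to be the image under the exponential map of a sufficiently small ball in $\mathfrak{gl}_d(\CC)$ on which the logarithm is well defined.) By continuity, $\rho^{-1}(U)$ is an open neighborhood of the identity in $\G(\ZZ_p)$, so by the previous paragraph it contains $\Gamma(p^n)$ for some $n\geq 1$. Since $\Gamma(p^n)$ is a subgroup, $\rho(\Gamma(p^n))$ is a subgroup of $U$ and hence trivial, so $\Gamma(p^n)\subseteq K$. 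Consequently, $\rho$ descends to a continuous representation $\bar\rho\colon \G(\Z{p^n})\to \GL_d(\CC)$ of the same dimension $d$.

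Finally, $\bar\rho$ is non-trivial because $\rho$ is, so Theorem~\ref{minimal-degree} yields $d\geq h(\G,p)$, as required. There is no real obstacle in this argument; the only point that merits a moment of care is the appeal to the no-small-subgroups property of $\GL_d(\CC)$, which is exactly what upgrades the closed kernel $K$ to an \emph{open} subgroup of $\G(\ZZ_p)$ and thereby allows the finite-level Theorem~\ref{minimal-degree} to be invoked.
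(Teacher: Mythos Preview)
Your proof is correct and follows essentially the same route as the paper: the paper packages the no-small-subgroups argument and the factoring through a finite quotient into Propositions~\ref{profinite3} and~\ref{factor}, and then simply cites Theorem~\ref{minimal-degree} together with Proposition~\ref{factor}. You have merely inlined those propositions, using the same key ingredients (the exponential-map neighborhood in $\GL_d(\CC)$ and the fact that the congruence kernels $\Gamma(p^n)$ form a basis of open normal subgroups).
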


Let $G$ be a compact, Hausdorff and second countable topological group and $\mu$ denote the Haar measure on $G$, normalized so that $\mu(G)=1$. Note that since $G$ is compact, and hence unimodular, a left Haar measure is automatically right invariant. A measurable subset of $A$ is said to be product-free if $A^2 \cap A =\emptyset$. We define the product-free measure as follows 
\begin{definition}
Let $G$ be a compact group with normalized Haar measure $\mu$. Define the product-free measure of $G$ by
$$ \p(G)=\sup \{ \mu(A): A \of G \textrm{ is measurable }, A\cap A^2= \emptyset \}.$$
\end{definition}
 We will extend an inequity of  Babai-Nikolov-Pyber, known as ``mixing inequality", which was originally  proven for finite groups. Our method is to consider the compact convolution operator, and then use the spectral theorem for compact operators. Namely we will prove
\begin{theorem}[Mixing inequality]\label{normbound}
Let $G$ be a compact, Hausdorff, and second countable topological group such that any non-trivial complex continuous representation of $G$ has dimension at least $m(G)$. Let $f_1,f_2\in L^2(G)$ and suppose that at least one of $f_1,f_2$ 
has mean zero. Then
\begin{equation}
\|f_1\ast f_2\|_2\leq \sqrt{\frac{1}{m(G)}}\|f_1\|_2\|f_2\|_2.
\end{equation}
\end{theorem}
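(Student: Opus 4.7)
The plan is to apply the Peter--Weyl theorem and the Plancherel formula for the compact group $G$, reducing the convolution inequality to an operator-level estimate that can be controlled fibrewise over the unitary dual $\widehat{G}$. First I would invoke Peter--Weyl to obtain the orthogonal decomposition $L^2(G) = \bigoplus_{\pi \in \widehat{G}} V_\pi \otimes V_\pi^*$ as a Hilbert direct sum (the second-countability of $G$ ensures $\widehat{G}$ is countable), and define the operator-valued Fourier transform $\hat{f}(\pi) = \int_G f(g)\pi(g^{-1})\,d\mu(g) \in \mathrm{End}(V_\pi)$. Plancherel's identity then reads $\|f\|_2^2 = \sum_{\pi} d_\pi \|\hat{f}(\pi)\|_{HS}^2$, where $\|\cdot\|_{HS}$ denotes the Hilbert--Schmidt norm.

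Next, a straightforward change of variable ($z = y^{-1}x$) in the defining integral yields the convolution-to-product identity $\widehat{f_1 \ast f_2}(\pi) = \hat{f_2}(\pi)\,\hat{f_1}(\pi)$, so applying Plancherel to $f_1 \ast f_2$ gives
\[
\|f_1 \ast f_2\|_2^2 = \sum_{\pi \in \widehat{G}} d_\pi \, \|\hat{f_2}(\pi)\,\hat{f_1}(\pi)\|_{HS}^2.
\]
Assuming without loss of generality that $f_1$ has mean zero, the coefficient $\hat{f_1}$ vanishes at the trivial representation, so the sum is effectively over $\pi \neq 1$. I would then bound each summand using the submultiplicativity $\|AB\|_{HS} \leq \|A\|_{op}\|B\|_{HS}$ together with the obvious $\|\hat{f_2}(\pi)\|_{op} \leq \|\hat{f_2}(\pi)\|_{HS}$.

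The key observation is that, applied to a single isotypic summand, Plancherel gives $d_\pi \|\hat{f_2}(\pi)\|_{HS}^2 \leq \|f_2\|_2^2$; hence for every non-trivial $\pi$ the hypothesis $d_\pi \geq m(G)$ furnishes the uniform bound $\|\hat{f_2}(\pi)\|_{op}^2 \leq \|f_2\|_2^2/m(G)$. Pulling this constant out of the sum and invoking Plancherel once more on $f_1$ gives
\[
\|f_1 \ast f_2\|_2^2 \;\leq\; \frac{\|f_2\|_2^2}{m(G)} \sum_{\pi \neq 1} d_\pi \|\hat{f_1}(\pi)\|_{HS}^2 \;\leq\; \frac{\|f_1\|_2^2\,\|f_2\|_2^2}{m(G)},
\]
which on taking square roots is the desired inequality; the case where $f_2$ has mean zero is symmetric, interchanging the roles of the two factors in the submultiplicativity step.

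The only non-routine ingredient is the Peter--Weyl theorem in the compact (rather than finite) setting, but the Hausdorff and second-countability hypotheses on $G$ let the classical form apply verbatim. In keeping with the paper's indicated functional-analytic framing, one may equivalently observe that the left-convolution operator $T_{f_1} \colon f \mapsto f_1 \ast f$ has integral kernel $K(x,y) = f_1(xy^{-1})$ with $\|K\|_{L^2(G \times G)} = \|f_1\|_{L^2(G)}$, so $T_{f_1}$ is Hilbert--Schmidt and therefore compact, and commutes with right translation; Peter--Weyl then yields a block decomposition of $T_{f_1}$ on which the spectral theorem for compact operators recovers the same fibrewise estimate.
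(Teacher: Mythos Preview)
Your argument is correct and gives a clean Fourier-analytic proof. The paper, however, takes a different route that avoids invoking the full Peter--Weyl decomposition: it works directly with the convolution operator $\Phi_K\colon h \mapsto f_1 \ast h$, restricts it to $L^2_0(G)$ (call this restriction $\Phi_0$), applies the singular value decomposition, and then shows by hand that the eigenspace $V_1$ of $\Phi_0^{*}\Phi_0$ corresponding to the top singular value $\lambda_1^2$ is invariant under the right-regular action $T_g h(x)=h(xg)$. Since $V_1 \subset L^2_0(G)$ this action is nontrivial, so $\dim V_1 \geq m(G)$, and the Hilbert--Schmidt bound $\lambda_1^2 \dim V_1 \leq \sum_i \lambda_i^2 \leq \|\Phi_K\|_{HS}^2 = \|f_1\|_2^2$ gives $\|\Phi_0\|_{op}^2 \leq \|f_1\|_2^2/m(G)$. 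Your approach is more transparent once Plancherel is in hand and makes the role of the hypothesis $d_\pi \geq m(G)$ visible at each frequency; the paper's approach is more self-contained (needing only the compactness of Hilbert--Schmidt operators and the SVD) and produces the relevant representation concretely as the top singular eigenspace, in effect proving just the fragment of Peter--Weyl needed rather than summoning the entire unitary dual.
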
 
This theorem has an immediate corollary: 
\begin{corollary}\label{main corollary}
Let $G$ be a compact, Hausdorff and second countable topological group such that any non-trivial complex continuous representation of $G$ has dimension at least $m(G)$.
Let $A,B\of G$ be two measurable sets then,
\begin{equation}
 \|1_{A}\ast 1_B-\mu(A)\mu(B)\|_2\le \sqrt{\frac{\mu(A)\mu(B)}{m(G)}}.
\end{equation}
\end{corollary}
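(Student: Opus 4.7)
The plan is to deduce this directly from the mixing inequality (Theorem~\ref{normbound}) by a standard centering trick. Set
\[
f_1 := 1_A - \mu(A), \qquad f_2 := 1_B.
\]
Then $\int_G f_1\, d\mu = \mu(A) - \mu(A) = 0$, so the hypothesis of Theorem~\ref{normbound} is satisfied.

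Next I would compute the convolution $f_1 * f_2$ explicitly. Since $G$ has unit Haar measure, the constant function $1$ satisfies $(1 * 1_B)(x) = \int_G 1_B(y)\, d\mu(y) = \mu(B)$, so $1 * 1_B$ is the constant $\mu(B)$. By bilinearity of convolution,
\[
f_1 * f_2 = 1_A * 1_B - \mu(A)\,\mu(B).
\]
Applying Theorem~\ref{normbound} to this pair then yields
\[
\|1_A * 1_B - \mu(A)\mu(B)\|_2 \leq \sqrt{\tfrac{1}{m(G)}}\,\|f_1\|_2\,\|f_2\|_2.
\]

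It only remains to bound the $L^2$-norms of the two factors. One has $\|f_2\|_2 = \sqrt{\mu(B)}$, and
\[
\|f_1\|_2^2 = \|1_A\|_2^2 - 2\mu(A)\int_G 1_A\,d\mu + \mu(A)^2 = \mu(A) - \mu(A)^2 \leq \mu(A).
\]
Substituting these two estimates gives the desired bound $\sqrt{\mu(A)\mu(B)/m(G)}$.

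There is no real obstacle here: the result is a one-line consequence of Theorem~\ref{normbound} once one notices that subtracting the mean of $1_A$ is exactly what produces the $\mu(A)\mu(B)$ correction after convolution. The only small point to be careful about is that we need the mean-zero hypothesis on \emph{at least one} of the two functions, which is why it suffices to center $1_A$ and leave $1_B$ alone; the asymmetry in the statement of Theorem~\ref{normbound} is used crucially to keep $\|f_2\|_2 = \sqrt{\mu(B)}$ rather than the weaker $\sqrt{\mu(B)(1-\mu(B))}$, though either choice would in fact give the stated bound.
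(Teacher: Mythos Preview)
Your proof is correct and takes essentially the same approach as the paper: the paper's one-line proof applies Theorem~\ref{normbound} to $f_1=1_A$ and $f_2=1_B-\mu(B)$, whereas you center the other function and take $f_1=1_A-\mu(A)$, $f_2=1_B$. Both choices yield $f_1*f_2=1_A*1_B-\mu(A)\mu(B)$ and the same norm bound, so this is a symmetric variant of the same argument.
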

For compact groups we can establish the following analogue of Gowers' theorem~\cite{Gowers}:
\begin{theorem}\label{main-int}
Suppose $G$ is a compact, Hausdorff and second countable topological group such that any non-trivial complex continuous representation of $G$ has dimension at 
least $m(G)$. If $A,B,C \subseteq G$ such that $\mu(A)\mu(B)\mu(C) > m(G)^{-1},$
then the set $AB \cap C$ has a positive measure. Moreover, if $m(G) \mu(A)\mu(B)\mu(C)\geq\frac{1}{\eta^2}$ then
\begin{equation}
\mu\{(x,y,z) \in A \times B \times C: xy=z \} \ge (1-\eta) \mu(A)\mu(B)\mu(C).
\end{equation}
\end{theorem}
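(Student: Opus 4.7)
The strategy is the standard Gowers-style reduction: encode the number of solution triples as an $L^2$-inner product and then replace $1_A * 1_B$ by its mean $\mu(A)\mu(B)$, with the error controlled by Corollary \ref{main corollary}.

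First I would unfold the set of solutions using Fubini and the convention $(f*g)(z) = \int_G f(y) g(y^{-1}z)\, d\mu(y)$. Since fixing $(x,y) \in A \times B$ determines $z = xy$ uniquely, the measure in question is
\[
\mu\bigl\{(x,y,z) \in A \times B \times C : xy = z \bigr\} = \int_G 1_A(x) 1_B(y) 1_C(xy)\, d\mu(x)\, d\mu(y) = \langle 1_A * 1_B,\, 1_C \rangle,
\]
where the inner product is taken in $L^2(G,\mu)$. Next, I would write $1_A * 1_B = \mu(A)\mu(B) + \bigl(1_A*1_B - \mu(A)\mu(B)\bigr)$ and apply Cauchy--Schwarz together with Corollary \ref{main corollary} and $\|1_C\|_2 = \sqrt{\mu(C)}$ to obtain
\[
\bigl| \langle 1_A * 1_B, 1_C \rangle - \mu(A)\mu(B)\mu(C) \bigr| \leq \bigl\|1_A*1_B - \mu(A)\mu(B)\bigr\|_2 \cdot \|1_C\|_2 \leq \sqrt{\frac{\mu(A)\mu(B)\mu(C)}{m(G)}}.
\]

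Both conclusions then drop out of this single inequality. For the first claim, the hypothesis $\mu(A)\mu(B)\mu(C) > 1/m(G)$ is exactly what makes $\mu(A)\mu(B)\mu(C) - \sqrt{\mu(A)\mu(B)\mu(C)/m(G)}$ strictly positive, so $\langle 1_A*1_B, 1_C\rangle > 0$; this forces $(1_A*1_B)\cdot 1_C$ to be non-zero on a set of positive measure, and any $z$ in that set automatically lies in $AB \cap C$. For the quantitative statement, I would observe that $m(G)\mu(A)\mu(B)\mu(C) \geq 1/\eta^2$ is equivalent to $\sqrt{\mu(A)\mu(B)\mu(C)/m(G)} \leq \eta\,\mu(A)\mu(B)\mu(C)$, which when substituted into the displayed inequality yields exactly the lower bound $(1-\eta)\mu(A)\mu(B)\mu(C)$.

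There is no real obstacle beyond bookkeeping: the analytic heart of the argument, namely the spectral estimate for the convolution operator, has already been packaged into Theorem \ref{normbound} and Corollary \ref{main corollary}. The only points requiring a small amount of care are the correct unfolding of the convolution (which uses that $G$ is unimodular, an automatic consequence of compactness) and the verification that the constant function $\mu(A)$, when convolved with $1_B$, produces the constant $\mu(A)\mu(B)$, which relies on the normalization $\mu(G) = 1$.
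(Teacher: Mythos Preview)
Your proof is correct and follows essentially the same approach as the paper: both reduce to the inner-product identity $\mu(\Sigma)=\langle 1_A\ast 1_B,1_C\rangle$, decompose off the main term $\mu(A)\mu(B)\mu(C)$, and control the remainder via Cauchy--Schwarz together with Corollary~\ref{main corollary}. The only cosmetic difference is that for the first assertion the paper bounds the measure of the zero set $S=\{y:(1_A\ast 1_B)(y)=0\}$ directly and shows $\mu(S)<\mu(C)$, whereas you deduce both conclusions from the single inequality $\bigl|\langle 1_A\ast 1_B,1_C\rangle-\mu(A)\mu(B)\mu(C)\bigr|\le\sqrt{\mu(A)\mu(B)\mu(C)/m(G)}$; this is a mild streamlining rather than a different argument.
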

By Theorem~\ref{minimal degree profinite} and Theorem~\ref{main-int} and a method discussed in Section~\ref{lower bound} we can establish  upper and lower bounds on the product-free measure of some profinite groups.  
\begin{theorem}
The product-free measure of the profinite groups $\SL_k(\ZZ_p)$ and $\Sp_{2k}(\ZZ_p)$ is given by
\begin{equation}
\begin{split}
\frac{1}{p+1} & \le \p\left(\SL_2(\ZZ_p)\right)  \le \left( \frac{p-1}{2} \right) ^{-1/3}, \\
\frac{p-1}{p^k-1} & \le \p\left(\SL_k(\ZZ_p)\right)  \le (p^k-p^{k-1})^{-1/3}, \quad\quad\quad\quad k \ge 3 \\
\frac{p-1}{p^{2k}-1}  & \le \p\left(\Sp_{2k}(\ZZ_p)\right) \le \left(\frac{1}{2}(p-1)p^{k-1}\right)^{-1/3}, \,\quad k \ge 2 \\
\end{split}.
\end{equation}
\end{theorem}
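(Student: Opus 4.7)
The plan is to derive the upper bounds directly from the preceding mixing/quasi-randomness results, and to exhibit explicit product-free cosets realizing the lower bounds.

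For the upper bounds, apply Theorem~\ref{main-int} with $A = B = C$. If $A \of G$ is measurable and product-free, then $A^2 \cap A = \emptyset$, so the conclusion ``$AB \cap C$ has positive measure'' must fail; this forces $m(G) \mu(A)^3 \le 1$, i.e., $\mu(A) \le m(G)^{-1/3}$. Substituting the values of $m(\G(\ZZ_p))$ obtained in Theorem~\ref{minimal degree profinite} ($(p-1)/2$ for $\SL_2$, $p^{k-1}-p^{k-2}$ for $\SL_k$, and $(p-1)p^{k-1}/2$ for $\Sp_{2k}$) then yields the three claimed upper bounds.

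For the lower bounds, the idea is to reduce modulo $p$. The mod-$p$ reduction $\pi : \G(\ZZ_p) \to \G(\FF_p)$ is a continuous surjective group homomorphism, and the pullback under $\pi$ of a product-free subset of $\bar G := \G(\FF_p)$ is automatically product-free and has the same normalized density in $\G(\ZZ_p)$. Thus it suffices to construct product-free cosets in the finite groups $\SL_k(\FF_p)$ and $\Sp_{2k}(\FF_p)$ of the correct density. In both families, $\bar G$ acts transitively on the set $\PP$ of lines in its standard representation, with $|\PP| = (p^k-1)/(p-1)$ for $\SL_k$ (which specializes to $p+1$ when $k=2$) and $|\PP| = (p^{2k}-1)/(p-1)$ for $\Sp_{2k}$.

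Given two distinct lines $L_1 \ne L_2$ in $\PP$, the natural candidate is
\[ \bar A \;=\; \{g \in \bar G : gL_1 = L_2\}, \]
which is a single coset of the (maximal parabolic) stabilizer of $L_1$ and therefore has density $1/|\PP|$, matching the claimed lower bound in each case. Product-freeness is essentially a one-line check: if $g_1, g_2, g_1 g_2 \in \bar A$, then $g_2 L_1 = L_2$ yields $g_1 L_2 = g_1 g_2 L_1 = L_2$, while $g_1 L_1 = L_2$; invertibility of $g_1$ then forces $L_1 = L_2$, a contradiction. No step presents a serious obstacle -- the entire proof amounts to packaging Theorem~\ref{main-int} together with the elementary observation that a nontrivial coset of a line-stabilizer is product-free, so the gap between the upper and lower bounds is purely the cube root built into Gowers' argument.
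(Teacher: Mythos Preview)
Your proof is correct and follows exactly the paper's route: the upper bounds come from Theorem~\ref{main-int} applied with $A=B=C$, and the lower bounds come from pulling back a nontrivial coset of the stabilizer of a projective line under reduction mod~$p$, which is precisely the content of Lemma~\ref{smallindex} and Section~\ref{lower bound}. One small caveat: substituting $m=p^{k-1}-p^{k-2}$ for $\SL_k$ actually yields $(p^{k-1}-p^{k-2})^{-1/3}$, not the $(p^{k}-p^{k-1})^{-1/3}$ printed in the statement; this mismatch is a typo already present in the paper (compare with the table in Theorem~\ref{minimal-degree}) rather than a flaw in your argument.
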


The upper bounds have the following implication:

\begin{corollary}\label{main-cor}
If $A$ is a measurable subset of the groups $G=\G(\ZZ_p)$ as defined in Theorem~\ref{minimal degree profinite} with $\mu(A) > h(\G,p)^{-1/3}$, then $A^3=G.$
\end{corollary}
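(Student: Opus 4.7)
The plan is to reduce the corollary to a direct application of Theorem~\ref{main-int}. Fix an arbitrary element $x \in G$ and set $B = A$ and $C = xA^{-1}$. Both $B$ and $C$ are measurable, and since $G$ is compact (hence unimodular) and the Haar measure is unique, it is also inversion-invariant; combined with left-invariance this gives $\mu(C) = \mu(xA^{-1}) = \mu(A^{-1}) = \mu(A)$. Observing that $x \in A^3$ if and only if there exist $a_1, a_2, a_3 \in A$ with $a_1 a_2 = x a_3^{-1}$, we see that $x \in A^3$ is implied by $AB \cap C \neq \emptyset$.

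The quantitative hypothesis is checked as follows. By Theorem~\ref{minimal degree profinite}, $m(G) \ge h(\G,p)$. The assumption $\mu(A) > h(\G,p)^{-1/3}$ therefore gives
\begin{equation*}
\mu(A)\mu(B)\mu(C) \;=\; \mu(A)^{3} \;>\; h(\G,p)^{-1} \;\ge\; \frac{1}{m(G)}.
\end{equation*}
Applying Theorem~\ref{main-int} to the triple $(A,B,C)$, we conclude that $AB \cap C$ has positive Haar measure; in particular it is non-empty, which yields $x \in A^{3}$. Since $x \in G$ was arbitrary, $A^{3} = G$.

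There is no real obstacle here, once one recognises the standard trick of encoding the statement ``$x \in A^{3}$'' as the non-emptiness of $AA \cap xA^{-1}$; the only minor point requiring a word of justification is the inversion-invariance of $\mu$, which ensures that the three sets entering Theorem~\ref{main-int} all have measure $\mu(A)$, so that the cube $\mu(A)^{3}$ appears on the left-hand side and matches the exponent $1/3$ in the hypothesis.
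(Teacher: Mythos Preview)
Your proof is correct and follows essentially the same approach as the paper: for an arbitrary target element, set $B=A$ and $C$ equal to that element times $A^{-1}$, verify $\mu(A)^3>1/m(G)$ via Theorem~\ref{minimal degree profinite}, and apply Theorem~\ref{main-int}. Your version is in fact slightly more careful, since you explicitly justify $\mu(xA^{-1})=\mu(A)$ via inversion-invariance of the Haar measure on a compact group, whereas the paper uses this silently.
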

\begin{proof}
For every $g\in \G(\ZZ_p)$, set $B=A$ and $C=gA^{-1}$. Since 
$
\mu(A)\mu(B)\mu(C)=\mu(A)^3> h(\G,p)^{-1}
$,
Theorem~\ref{minimal degree profinite} and Theorem~\ref{main-int} show that, $AB \cap C\neq\emptyset$. If $x\in AB \cap C$ then $x=ga_3^{-1}=a_1a_2$ for $a_1,a_2,a_3 \in A$ which proves the claim.
\end{proof}
Using a similar method, we can obtain lower and upper bounds for a particular subgroup $A^+_{k+1}$
of the automorphism group of a rooted regular tree. For the definition of this subgroup we refer the reader to Section~\ref{Automorphisms of the regular tree}.
 \begin{theorem}\label{tree}
For all $k \ge 6$ we have
\begin{equation}
\frac{1}{k+1} \le \p(A^+_{k+1})  \le \frac{1}{(k-1)^{1/3}}.
\end{equation}
\end{theorem}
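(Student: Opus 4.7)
The plan is to obtain the two bounds by essentially independent methods: the lower bound from an explicit construction descending from the action on the first level of the tree, and the upper bound from the mixing inequality (Theorem~\ref{main-int}) once a quasi-randomness estimate $m(A^+_{k+1}) \ge k-1$ is in hand.

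For the lower bound $\p(A^+_{k+1}) \ge 1/(k+1)$, I would consider the natural projection $\pi \colon A^+_{k+1} \to P$, where $P$ is the permutation group induced on the $k+1$ children of the root (a transitive subgroup of $S_{k+1}$, presumably the alternating group $A_{k+1}$ judging from the ``$+$'' in the notation). For any two distinct indices $i,j \in \{1,\ldots,k+1\}$, the set
\[
S_{ij} = \{ \sigma \in P : \sigma(i) = j \}
\]
is product-free in $P$: if $\sigma,\tau \in S_{ij}$ then $(\sigma\tau)(i) = \sigma(j)$, and since $\sigma$ is a bijection with $\sigma(i) = j$ one necessarily has $\sigma(j) \ne j$, so $\sigma\tau \notin S_{ij}$. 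By transitivity of $P$, the relative Haar measure of $S_{ij}$ is $1/(k+1)$, and its preimage $\pi^{-1}(S_{ij})$ is a clopen product-free subset of $A^+_{k+1}$ of the same measure.

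For the upper bound, I would specialize Theorem~\ref{main-int} to the case $A=B=C$, taking $A$ to be a product-free measurable set. Since $A^2 \cap A = \emptyset$ has measure zero, the theorem forces
\[
\mu(A)^3 \le \frac{1}{m(A^+_{k+1})}.
\]
Granted the bound $m(A^+_{k+1}) \ge k-1$ for $k \ge 6$, this immediately yields $\mu(A) \le (k-1)^{-1/3}$, finishing the upper bound after taking a supremum over product-free $A$.

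The main obstacle, and where I expect the real work to lie, is establishing the quasi-randomness estimate $m(A^+_{k+1}) \ge k-1$. Since $A^+_{k+1}$ is profinite, it suffices to bound below the minimal degree of a nontrivial irreducible representation of each finite level-$n$ quotient, which has the structure of an iterated wreath product with top factor $P \cong A_{k+1}$. The natural route is induction on the depth via Clifford theory: any irreducible representation either factors through the top-level action (in which case its dimension is at least $m(A_{k+1})$, which equals $k$ precisely when $k+1 \ge 7$, matching the hypothesis $k \ge 6$), or it is induced from a nontrivial character of the level-$1$ stabilizer, in which case the $(k+1)$-fold product structure of that stabilizer together with a Mackey-type orbit analysis of the $P$-action on characters forces the dimension to be at least $k+1$. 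The delicate point will be ensuring that in the induced case the orbit sums never collapse below $k-1$ and that the inductive step propagates cleanly from one level of the tree to the next.
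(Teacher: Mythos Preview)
Your lower bound and the reduction of the upper bound to the estimate $m(A^+_{k+1})\ge k-1$ are correct and coincide with the paper: the set $S_{ij}$ is precisely a non-identity coset of the point stabiliser $\Alt_k\le\Alt_{k+1}$, which is exactly how the paper invokes Lemma~\ref{smallindex}, and the application of Theorem~\ref{main-int} with $A=B=C$ is the same.

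Where you diverge is in the quasi-randomness estimate itself. The paper does not run Clifford theory. Working in the finite quotient $F_2\cong\Alt_{k+1}\ltimes(\Sigma_k\times\cdots\times\Sigma_k)^+$, it argues by successive restriction: if $\rho|_{\Alt_{k+1}}$ is nontrivial, Theorem~\ref{alt} gives $\dim\rho\ge k$; otherwise, if $\rho$ is nontrivial on some $\Alt_k$ factor inside the normal subgroup $\Alt_k\times\cdots\times\Alt_k$, Theorem~\ref{alt} gives $\dim\rho\ge k-1$; otherwise $\rho$ descends to an $\Alt_{k+1}$-equivariant representation of the elementary abelian group $\mathscr{L}=\{v\in\FF_2^{k+1}:\sum v_i=0\}$, and a short combinatorial lemma shows any such nontrivial representation is faithful (or has kernel of order $2$ when $k+1$ is even), whence $\dim\rho\ge k-1$ by simultaneous diagonalisation of the image.

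Your Clifford outline is a viable alternative, but as written it has a gap. The level-$1$ kernel $N=(\Sigma_k\times\cdots\times\Sigma_k)^+$ is not abelian, so the dichotomy ``factors through the top'' versus ``induced from a nontrivial character of $N$'' is incomplete: you must also treat the case where $\rho|_N$ contains an irreducible constituent $\tau$ with $\dim\tau\ge 2$. The fix is essentially the paper's second restriction step in disguise: such a $\tau$ cannot factor through the abelianisation $N/(\Alt_k\times\cdots\times\Alt_k)\cong\mathscr{L}$, hence is nontrivial on some $\Alt_k$ factor, and Theorem~\ref{alt} already forces $\dim\tau\ge k-1$. It is this branch, not the orbit count on one-dimensional characters (which would give $k+1$), that produces the bottleneck value $k-1$ appearing in the theorem.
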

Using a result of Green and Ruzsa~\cite{GR}, we can also compute the exact value of product-free measure of the additive group of $p$-adic integers. Favouring a consistent terminology, we continue to use product-free (rather than sum-free) for subsets of these additive groups.  

\begin{theorem}\label{abel}
The product-free measure of the additive groups of $p$-adic integers $\ZZ_p$ and power series $\FF_p[[t]]$ are respectively given by,
\begin{equation}\label{formula}
\begin{split}
\p(\ZZ_p) &=\pfunction{1/3+1/(3p)}{p \equiv 2 \mod 3}{1/3}{\emph{otherwise}}. \\
\p(\FF_p[[t]]) &=\ppfunction{1/3+1/(3p)}{p \equiv 2 \mod 3}{1/3}{p=3}{1/3-1/(3p)}{p \equiv 1 \mod 3}.
\end{split}
\end{equation}
\end{theorem}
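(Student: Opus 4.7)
My plan is to reduce to the finite quotients $\Z{p^n}$ and $\FF_p[t]/(t^n)\cong(\Z{p})^n$ and invoke the Green--Ruzsa theorem \cite{GR}, which gives the maximum density of a sum-free subset of every finite abelian group: this density is $1/3+1/(3q)$ when $|H|$ admits a prime divisor $q\equiv 2\pmod 3$ (for $q$ minimal such), equals $1/3$ when no such prime exists but $3\mid|H|$, and equals $1/3-1/(3m)$ when every prime divisor of $|H|$ is $\equiv 1\pmod 3$, where $m$ is the maximal element order. The lower bounds come from pullback: under any continuous surjection $\pi\colon G\twoheadrightarrow H$ of compact groups, the preimage of a product-free set in $H$ is a product-free measurable subset of $G$ of the same normalized Haar measure, so $\p(G)\geq\p(H)$. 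Applied to the projections $\ZZ_p\to\Z{p^n}$ and $\FF_p[[t]]\to(\Z{p})^n$ (together with the explicit Green--Ruzsa extremal configurations---the middle-third interval in $\Z{p}$ pulled up when $p\equiv 2\pmod 3$; a middle-third interval in $\Z{p^n}$ as $n\to\infty$ when $p\equiv 1\pmod 3$ for $\ZZ_p$; and the analogue in $\Z{p}$ for $\FF_p[[t]]$), this yields every lower bound in \eqref{formula}.

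For the upper bounds, the crux is the general identity $\p(G)=\sup_n \p(G_n)$ for a profinite abelian group $G=\varprojlim G_n$. Granted this, the Green--Ruzsa formula handles the theorem case by case: the quotients $\Z{p^n}$ of $\ZZ_p$ all have smallest prime divisor $p$ and maximum element order $p^n$, giving densities $1/3+1/(3p)$ when $p\equiv 2\pmod 3$ (independent of $n$), $1/3$ when $p=3$, and $1/3-1/(3p^n)\to 1/3$ when $p\equiv 1\pmod 3$; the quotients $(\Z{p})^n$ of $\FF_p[[t]]$ differ only in that the maximum element order is always $p$, which produces $1/3-1/(3p)$ in the $p\equiv 1\pmod 3$ case and explains the divergence between the two formulas in \eqref{formula}.

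The main obstacle is proving the direction $\p(G)\leq\sup_n\p(G_n)$. The naive attempt---``saturate'' $A$ to those cosets of $\ker\pi_n$ in which $A$ is dense---only preserves product-freeness once the density threshold exceeds $1/2$, which is useless for the $1/3$ barrier we are after. I would instead proceed Fourier-analytically, using that $\widehat{G}=\bigcup_n\widehat{G_n}$ as discrete torsion groups, so every character factors through some $G_n$. Plancherel applied to the product-free identity $\int(1_A\ast 1_A)\cdot 1_A\,d\mu=0$ yields
\[
\alpha^3 \;=\; -\sum_{\chi\neq 0}\hat{1_A}(\chi)\,\bigl|\hat{1_A}(\chi)\bigr|^2,
\]
forcing some non-trivial $\chi_0$ with $|\hat{1_A}(\chi_0)|$ bounded below in terms of $\alpha=\mu(A)$. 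Selecting $n$ so that $\chi_0$ factors through $G_n$ and importing Green--Ruzsa's structural step---whereby a large Fourier coefficient concentrates $A$ on the level sets of $\chi_0$---transports the bound from $G$ to $G_n$ up to an error that vanishes as $n\to\infty$. The delicate technical point is the quantitative passage from the fractional conditional expectation $\EE[1_A\mid\pi_n]$ (a $[0,1]$-valued function on $G_n$ whose ``triangle integral'' tends to $0$) back to a bona fide product-free subset of $G_n$ of density at least $\alpha-o(1)$, and this would mirror the discretization step in Green--Ruzsa's original finite-abelian proof.
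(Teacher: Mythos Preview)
Your lower-bound strategy (pullback from finite quotients) matches the paper exactly. For the upper bound, however, you have taken a substantially harder road and left the decisive step unfinished, while the paper dispatches it with a three-line compactness argument that you have overlooked.

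Here is the paper's route. Given a product-free $A \subseteq G$ with $\mu(A) > f(p)$, pass to a compact $A_1 \subseteq A$ still of measure exceeding $f(p)$. For every $n$ the trivial containment $A_1 \subseteq \phi_n^{-1}(\phi_n(A_1))$ gives $|\phi_n(A_1)|/|G_n| \geq \mu(A_1) > f(p) \geq \p(G_n)$ (the last inequality by Green--Ruzsa applied to $G_n$). Hence $\phi_n(A_1)$ is \emph{not} product-free, and lifting a solution yields $x_n,y_n,z_n \in A_1$ with $x_n+y_n-z_n \in \ker\phi_n$. Compactness of $A_1$ extracts a subsequential limit $(x,y,z)\in A_1^3$ with $x+y=z$, contradicting product-freeness of $A$. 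No Fourier analysis, no conditional expectations, no discretization.

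Your Fourier-analytic plan, by contrast, has a real obstacle at the point you flag as ``delicate''. It is true (by martingale convergence) that the triangle integral $T(f_n)$ of $f_n=\EE[1_A\mid\pi_n]$ tends to~$0$, but this does \emph{not} yield a product-free subset of $G_n$ of density close to $\alpha$. Thresholding $f_n$ at level $\epsilon$ gives a set of density $\geq \alpha-\epsilon$ containing at most $T(f_n)|G_n|^2/\epsilon^3$ additive triples, and there is no reason for $T(f_n)$ to decay faster than $|G_n|^{-2}$. Closing this gap would require a genuinely functional version of the Green--Ruzsa theorem (bounding $\int f$ for $[0,1]$-valued $f$ with small triangle integral), which is not what their paper provides and would be a separate project. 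The compactness trick sidesteps all of this: one never needs $\phi_n(A_1)$ to be product-free, only to violate product-freeness and thereby manufacture approximate solutions that converge.
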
 

This paper is organized as follows: In Section~\ref{Prel}, we will recall the definitions and set some notations. Moreover, in those sections we will establish some elementary properties of the product-free measure. In Section~\ref{facts-profinite}, we gather some facts about the representation theory of profinite groups. In Section~\ref{root function} we will prove Theorem~\ref{minimal-degree}.  
Gowers' proof~\cite{Gowers} uses the language of quasirandom graphs. We will translate his argument to a functional analytic language that is more suitable for dealing with compact groups. This is done 
in Section~\ref{functional analysis}. In Section~\ref{lower bound}, we will establish the lower bounds. In  Section~\ref{Automorphisms of the regular tree}, we will prove Theorem ~\ref{tree}. Finally, in Section~\ref{Product-free measure}, we will prove Theorem~\ref{abel}.
\section{Preliminaries and Notations}\label{Prel}
Groups considered in this paper are all assumed to be compact, Hausdorff and second countable. In general the group operation
is denoted multiplicatively; we occasionally make an exception for abelian groups and shift to the additive notation. We use $\mu$ for the normalized bi-invariant Haar measure on the group. The corresponding Lebesgue spaces will be denoted by $L^p(G)$ and the respective norm is denoted by $\| \cdot \|_p$.
For a subset $A$ of a group $G$, we use $1_A$ to denote the characteristic function of $A$. For subsets
$A$ and $B$, the product set $AB$ is the set of all products of the form $ab$ where $a \in A$ and $b \in B$.
We also use the shorthand $A^2=AA$. The cardinality of a finite set $A$ will be denoted by $|A|$. 
The finite field with $p$ elements is denoted by $\FF_p$. We will be working with the ring of $p$-adic integers and the ring of formal power series over
$\FF_p$, denoted respectively by $\ZZ_p$ and $\FF_p[[t]]$. Each one of these groups is equipped with the profinite topology. 

Moving to product-free measure. First note that $\p(G) \le 1/2$. This follows from the fact that if $A \cap A^2 =\emptyset$ then for each $x \in A$, the sets $A$ and $xA$ are disjoint and have the same Haar measure. One can also easily see that for 
any non-trivial group $G$, $\p(G)>0$. Let $G$ be a compact group. It is known that the topology of $G$ is given by a bi-invariant metric (see Corollary A4.19 in~\cite{Hofmann}.) Let $d_G$ be such a metric and $D={\mathrm{diam}}(G)$ be the diameter of $G$. Let us 
also denote $f(r)=\mu(B(x,r))$ (note that the bi-invariance of $d_G$ implies that volume of the ball is independent of the center.) Then we have
\begin{proposition} For any non-trivial group $G$,  $\p(G) \ge f(D/3)>0.$
\end{proposition} 
\begin{proof}
Choose $y,z \in G$ such that $d_G(y,z)=D$ and let $x=z^{-1}y$. We have,
\[ d_G(x,x^2)=d_G(1,x)=d_G(z,zx)=d_G(z,y)=D.\]
An application of triangle inequality shows that if $u,v \in B(x,D/3)$ then $uv \in B(x^2,2D/3)$
and hence $uv \notin B(x, D/3)$. This shows that $B(x, D/3)$ is product-free.
\end{proof}
It is worth pointing out that one can give an alternative definition by replacing $A \cap A^2 =\emptyset$ with $\mu(A \cap A^2)=0$. However, this turns out to be equivalent:
\begin{proposition}
Suppose $G$ is an infinite compact group with Haar measure $\mu$. Define 
\begin{equation*}
\p_0(G) =\sup \{ \mu(A): A \of G \textrm{ is measurable }, \mu(A\cap A^2)=0\}.
\end{equation*}
Then $\p_0(G)=\p(G).$
\end{proposition}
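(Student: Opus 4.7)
The plan is to prove the two inequalities separately. The direction $\rm_0(G)\ge \rm(G)$ is immediate from the definitions: any admissible set for $\rm(G)$ (one with $A\cap A^2=\emptyset$) is automatically admissible for $\rm_0(G)$ (it satisfies $\mu(A\cap A^2)=0$). So the content is in showing $\rm_0(G)\le \rm(G)$.

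For the non-trivial direction, fix a measurable set $A\subseteq G$ with $\mu(A\cap A^2)=0$ and fix $\eps>0$. The natural impulse would be to replace $A$ by the ``cleaned up'' set $A\setminus A^2$, which has the same measure and is easily seen to be product-free. The obstacle is that, in a general compact group, $A^2$ need not be Borel (it is merely analytic as a continuous image of $A\times A$), so one has to be careful about measurability. The standard way around this is to use inner regularity of the Haar measure to approximate $A$ from inside by a compact set: choose a compact $K\subseteq A$ with $\mu(K)\ge \mu(A)-\eps$. Then $K\times K$ is compact in $G\times G$, and so $K^2$ is compact (hence Borel) in $G$, because it is the continuous image of $K\times K$ under the multiplication map.

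Now set $K':=K\setminus K^2$. This is the intersection of the measurable set $K$ with the open set $G\setminus K^2$, so it is measurable. Since $K\subseteq A$, we have $K\cap K^2\subseteq A\cap A^2$, which is a null set, so
\[
\mu(K')=\mu(K)-\mu(K\cap K^2)=\mu(K)\ge \mu(A)-\eps.
\]
Moreover $(K')^2\subseteq K^2$ (as $K'\subseteq K$), while by construction $K'\cap K^2=\emptyset$. Hence $K'\cap (K')^2=\emptyset$, i.e.\ $K'$ is genuinely product-free in the strict sense, and so $\mu(K')\le \rm(G)$. Combining, $\mu(A)-\eps\le \rm(G)$ for every $\eps>0$, which yields $\mu(A)\le \rm(G)$; taking the supremum over all such $A$ gives $\rm_0(G)\le \rm(G)$.

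The only subtle step is the measurability of $A^2$, and the compact-approximation argument above sidesteps it cleanly. The hypothesis that $G$ is infinite plays essentially no role in the argument itself, and is probably included only because the statement is vacuous in the finite case (where $\mu(A\cap A^2)=0$ forces $A\cap A^2=\emptyset$). Everything else — inner regularity of Haar measure on a compact Hausdorff second countable group, and the observation that continuous images of compact sets are compact — is standard.
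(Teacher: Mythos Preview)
Your proof is correct. The paper takes precisely the shortcut you anticipate and then flag: it simply sets $B = A \setminus (A \cap A^2)$, notes $\mu(B)=\mu(A)$ and $B \cap B^2 \subseteq B \cap A^2 = \emptyset$, and says nothing about whether $A^2$ (or $A\cap A^2$) is measurable. Your detour through a compact $K\subseteq A$ buys exactly what you say it does: $K^2$ is compact, hence Borel, so $K\setminus K^2$ is unambiguously measurable --- a genuine gain in rigor at the price of one $\eps$ and an appeal to inner regularity. The paper's one-line version can be patched (with a complete Haar measure, $B$ differs from the measurable set $A$ only on the null set $A\cap A^2$, hence is itself measurable), but your argument sidesteps the issue entirely and is the cleaner write-up.
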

\begin{proof}
It is clear that $\p(G)\le\p_0(G)$. To prove the inverse inequalities, let 
$A$ be a measurable set with $\mu(A \cap A^2)=0$. Then $B=A\setminus(A \cap A^2) \of A$ has the same measure as $A$
and $B \cap B^2 \of B \cap A^2 =\emptyset$. This shows that $\p(G)\le\p_0(G)$. 
\end{proof}
The following lemma is very useful.
\begin{lemma}\label{smallindex}
Let $H$ be a proper subgroup of a finite group $G$. Then $G$ contains a subset of density $[G:H]^{-1}$ which
is product-free. Similarly, if $G$ is a profinite group and $H$ is a proper open subgroup, then $G$ contains
an open product-free set of measure $[G:H]^{-1}$. 
\end{lemma}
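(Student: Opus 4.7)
The plan is to exhibit an explicit product-free set of the desired size, namely a single left coset of $H$ that is not $H$ itself. The verification that such a coset is product-free boils down to an elementary manipulation in $H$, so there is no real obstacle, and the profinite case will then follow immediately from the finite one together with the observation that an open subgroup of a compact group has finite index.

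First I would fix an element $g \in G \setminus H$ (which exists because $H$ is a proper subgroup) and set $A = gH$. In the finite case, $|A|/|G| = |H|/|G| = [G:H]^{-1}$, which is the required density. In the profinite case, since $H$ is open in the compact group $G$, the cosets of $H$ form an open partition of $G$, and compactness forces $[G:H]$ to be finite, so $\mu(gH) = [G:H]^{-1}$; moreover $gH$ is open.

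The key computation is to show that $A \cap A^2 = \emptyset$. Suppose for contradiction that $x \in gH \cap (gH)(gH)$. Then there exist $h, h_1, h_2 \in H$ such that
\begin{equation*}
gh = (gh_1)(gh_2),
\end{equation*}
which rearranges to $g = h_1^{-1} h h_2^{-1}$. The right-hand side lies in $H$, contradicting the choice $g \notin H$. Hence $A$ is product-free, completing the proof in both the finite and the profinite setting.

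There really is no hard step here; the only thing to be careful about is not to confuse left and right cosets when doing the product-free check, and to remember that in the profinite case openness of $H$ is what guarantees both that $[G:H]$ is finite and that the translate $gH$ is open and has the expected Haar measure via \eqref{haar}.
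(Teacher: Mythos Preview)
Your proof is correct and follows exactly the same idea as the paper's own proof: take a nontrivial left coset $A = gH$ and observe that $A^2 \cap A = \emptyset$. You simply spell out the verification and the profinite bookkeeping in more detail than the paper does.
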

\begin{proof}
Let $A=xH$ be a left coset of $H$ other than $H$. It is easy to see that $A^2 \cap A  =\emptyset$.
\end{proof}
\section{Complex representations of profinite groups}\label{facts-profinite}
In this section we will gather some facts about profinite groups that will be used later. Our final aim in this section is to show that any non-trivial complex continuous representation of $\SL_k(\ZZ_p)$ (respectively, $\Sp_{2k}(\ZZ_p)$) factors through a non-trivial representation of $\SL_k(\Z{p^n})$ (respectively, $\Sp_{2k}(\Z{p^n})$) for some $n$. In the next section we will find a lower bound for such a representation.

A topological group which is the projective limit of finite groups, each equipped with the discrete topology, is called a profinite group. Such a group is compact and totally disconnected. 
The Haar measure for a profinite group $G$ can be easily described as a ``limit'' of counting measures. More precisely, for an open set  $U \of G$ we have,
\begin{equation}\label{haar}
\mu(U)=\lim_{i} \frac{|\phi_i(U)|}{|G_i|}. 
\end{equation}
We call a family $\mathcal{I}$ of normal subgroups of an arbitrary group $G$ a filter base  if for all $K_1,K_2\in \mathcal{I}$ there is a subgroup $K_3\in \mathcal{I}$ which is contained in $K_1\cap K_2$. Now let $G$ be a topological group and $\mathcal{I}$ a filter base of closed 
normal subgroups, and for $K,L\in \mathcal{I}$ define $K\leq' L$ if and only if $L$ is a subgroup of $K$.  Thus $\mathcal{I}$ is a directed set with respect to the order $\leq'$ and the surjective homomorphisms $q_{KL}: G/L \rightarrow G/K$, defined for $K \leq' L$, make the groups $G/K$ into an inverse system. Write 
$\widehat{G}=\underleftarrow {\lim } (G/K).$
There is a continuous homomorphism $\theta:G \rightarrow\widehat{G}$
with kernel $\bigcap_{K\in \mathcal{I}} K$, whose image is dense in 
$\widehat{G}$. We have the following
\begin{proposition}[See~\cite{profinite}, proposition 1.2.2]\label{profinite1}
If $G$ is compact then $\theta$ is surjective; if $G$ is compact and $\bigcap_{K\in \mathcal{I}} K=\{id\}$, then $\theta$ 
is an isomorphism of topological groups.  
\end{proposition}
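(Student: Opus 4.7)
The plan is to prove the two assertions in turn, with both resting on the standard topological fact that a continuous map from a compact space into a Hausdorff space is automatically closed.

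\textbf{Surjectivity.} The excerpt already tells us that $\theta(G)$ is dense in $\widehat{G}$, so it suffices to show that $\theta(G)$ is closed. First I would check that $\widehat{G}$ is Hausdorff: by construction it is the inverse limit of the discrete (hence Hausdorff) quotients $G/K$ and can be identified with a subset of the product $\prod_{K\in\mathcal{I}} G/K$ cut out by the compatibility conditions $q_{KL}(gL)=gK$, which are closed conditions in a product of Hausdorff spaces. Since $G$ is compact and $\theta$ is continuous, $\theta(G)$ is a compact subset of the Hausdorff space $\widehat{G}$, and therefore closed. A closed dense subset equals the whole space, so $\theta(G)=\widehat{G}$.

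\textbf{Isomorphism.} Under the hypothesis $\bigcap_{K\in\mathcal{I}} K=\{\mathrm{id}\}$, the stated description of $\ker\theta$ shows that $\theta$ is injective. Combined with the surjectivity just proved, $\theta$ is a continuous bijective group homomorphism from the compact group $G$ onto the Hausdorff group $\widehat{G}$. The same compact-to-Hausdorff principle now shows that $\theta$ carries closed (equivalently compact) subsets of $G$ to compact, hence closed, subsets of $\widehat{G}$; thus $\theta$ is a closed map, so the set-theoretic inverse $\theta^{-1}$ is continuous. Hence $\theta$ is a homeomorphism, and being already a group homomorphism, it is an isomorphism of topological groups.

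\textbf{Where care is needed.} There is no serious obstacle here; the only subtlety is making sure that $\widehat{G}$ is truly Hausdorff so that the compact-to-Hausdorff closed-map lemma applies. This is why I would spell out the inverse-limit construction as a closed subgroup of $\prod_{K\in\mathcal{I}} G/K$ with the product topology, which is Hausdorff because each factor is. Once this is in place, the rest of the argument is a mechanical application of point-set topology and requires no extra input from the group structure beyond the continuity of $\theta$.
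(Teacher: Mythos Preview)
Your argument is correct and is the standard proof: density plus compactness of the image in a Hausdorff target gives surjectivity, and a continuous bijection from a compact space to a Hausdorff space is automatically a homeomorphism. Note, however, that the paper does not actually prove this proposition; it merely quotes it from Wilson's \emph{Profinite groups} (Proposition~1.2.2), so there is no ``paper's own proof'' to compare against---your write-up simply fills in what the paper leaves to the reference.
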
 
Moreover we have:
\begin{proposition}[See~\cite{profinite}, proposition 1.2.1]\label{profinite2}
Let $(G,\varphi_n)$ be an inverse limit of an inverse system $(G_n)$ 
of compact Hausdorff topological groups and let $L$ be an open normal subgroup of $G$. Then $\ker\varphi_n \leq L$ 
for some $n$. 
\end{proposition}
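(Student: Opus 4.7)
The plan is to exploit the two defining features of the inverse-limit topology on $G$: that the kernels $\ker \varphi_n$ form a filter base of closed normal subgroups whose intersection is trivial, and that $G$ is compact so that closed subsets are compact.

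First I would verify the filter-base property of $\{\ker \varphi_n\}$. For indices $m \geq n$ in the underlying directed set, the transition map $\varphi_{nm}\colon G_m \to G_n$ satisfies $\varphi_n = \varphi_{nm}\circ\varphi_m$, and therefore $\ker \varphi_m \of \ker \varphi_n$; hence the family is directed downward. Each $\ker \varphi_n$ is closed because $\{1\}$ is closed in the Hausdorff group $G_n$ and $\varphi_n$ is continuous. Moreover, by the very definition of the inverse limit, an element $g \in G$ is trivial if and only if $\varphi_n(g) = 1$ for every $n$, so
\[ \bigcap_n \ker \varphi_n = \{1\}. \]

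The second step is a compactness argument. Set $K = G \setminus L$. Since $L$ is open, $K$ is closed in the compact group $G$, hence compact. Each intersection $K \cap \ker \varphi_n$ is a closed subset of $K$, and
\[ \bigcap_n \bigl( K \cap \ker \varphi_n \bigr) = K \cap \{1\} = \emptyset, \]
because $1 \in L$. By the finite intersection property of the compact space $K$, there is a finite collection of indices $n_1, \ldots, n_r$ with $K \cap \bigcap_{i=1}^{r} \ker \varphi_{n_i} = \emptyset$. Using the filter-base property, pick $n$ dominating $n_1, \ldots, n_r$; then $\ker \varphi_n \of \bigcap_i \ker \varphi_{n_i}$, so $K \cap \ker \varphi_n = \emptyset$, which is exactly $\ker \varphi_n \of L$.

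I do not anticipate a serious obstacle here: the argument is a clean compactness-plus-filter-base exercise and uses nothing specific to the profinite setting beyond what is encoded in being an inverse limit of compact Hausdorff groups. The only point requiring mild care is the verification that the kernels form a filter base with trivial intersection, which follows formally from the compatibility of the transition maps and the construction of the inverse limit.
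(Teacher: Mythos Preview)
The paper does not supply its own proof of this proposition; it is quoted, with reference, as Proposition~1.2.1 of Wilson's \emph{Profinite groups}. Your argument is correct and is the standard one: the kernels $\ker\varphi_n$ are closed normal subgroups directed downward with trivial intersection, and compactness of the closed complement $G\setminus L$ then forces some $\ker\varphi_n$ to miss it entirely via the finite intersection property.
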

For the profinite group $\SL_k(\ZZ_p)$, consider the following surjective homomorphism
\begin{equation}\label{K_n-pro}
0 \longrightarrow K_n  \longrightarrow \SL_k (\ZZ_p)\xrightarrow{{\varphi _n }}\SL_k (\Z{p^n}) \longrightarrow 0,
\end{equation}
where $\varphi_n$ is induced by the canonical surjective homomorphism $\ZZ_p\rightarrow\Z{p^n}$. Clearly the set $\mathcal{I}$ consists of $K_n$ is a filter base and $\bigcap K_n=\{I\}$, therefore by Proposition~\ref{profinite1} we have 
\[
\SL_k (\ZZ_p) = \underleftarrow {\lim } \, \SL_k(\Z{p^n}),
\quad  \Sp_{2n}(\ZZ_p)=\underleftarrow {\lim } \, \Sp_{2k}(\Z{p^n}).
\]  

The following proposition is a standard fact in the context of the Galois representation, however for the sake of completeness we will prove it. 
\begin{proposition}\label{profinite3}
Let $G$ be a profinite group, and assume $\rho:G\rightarrow \GL_m(\mathbb{C})$ is a continuous representation. Then the kernel of $\rho$ is an open subgroup, hence ${\mathrm{Im}}(\rho)$ is a finite subgroup of $\GL_m(\mathbb{C})$.  
\end{proposition}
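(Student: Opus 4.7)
The plan is to exploit the ``no small subgroups'' property of $\GL_m(\mathbb{C})$ together with the fact that a profinite group has a neighborhood basis of the identity consisting of open normal subgroups. Concretely, I will produce an open normal subgroup of $G$ that is contained in $\ker\rho$, which forces $\ker\rho$ to be open; since $G$ is compact, the quotient $G/\ker\rho$ will then be both compact and discrete, hence finite, giving the desired finiteness of $\mathrm{Im}(\rho)$.

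First I would establish (or quote) the fact that there exists an open neighborhood $V$ of the identity $I$ in $\GL_m(\mathbb{C})$ which contains no non-trivial subgroup. The standard way is to fix an operator norm $\|\cdot\|$, take $V=\{A\in\GL_m(\mathbb{C}): \|A-I\|<1/2\}$, and use the matrix logarithm: on $V$, $\log$ is well defined and injective, and one checks that if $A\in V$ with $A\neq I$, then the iterates $A^n=\exp(n\log A)$ eventually escape $V$ because $\|n\log A\|\to\infty$. Thus any subgroup of $\GL_m(\mathbb{C})$ entirely contained in $V$ must be trivial.

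Next, I would set $U=\rho^{-1}(V)$, which is an open neighborhood of the identity in $G$ by continuity of $\rho$. Since $G$ is profinite, open normal subgroups form a fundamental system of neighborhoods of the identity; equivalently, writing $G=\varprojlim G/K$ over the filter base of open normal subgroups and applying Proposition~\ref{profinite2}, one finds an open normal subgroup $N\of G$ with $N\of U$. Then $\rho(N)$ is a subgroup of $\GL_m(\mathbb{C})$ entirely contained in $V$, hence $\rho(N)=\{I\}$ by the previous step, so $N\of\ker\rho$. Therefore $\ker\rho$ contains an open subgroup and is itself open.

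Finally, $G/\ker\rho$ is a compact Hausdorff group carrying the discrete topology (since $\ker\rho$ is open, every coset is open), so it is finite. Thus $\mathrm{Im}(\rho)\cong G/\ker\rho$ is a finite subgroup of $\GL_m(\mathbb{C})$. The main obstacle, if any, is the no-small-subgroups lemma for $\GL_m(\mathbb{C})$; everything else is formal once one has a neighborhood basis of the identity in $G$ consisting of open normal subgroups, which is exactly the content of Proposition~\ref{profinite2} applied to the intersection of all open normal subgroups.
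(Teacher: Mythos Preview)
Your proposal is correct and follows essentially the same approach as the paper: both arguments combine the ``no small subgroups'' property of $\GL_m(\mathbb{C})$ with the existence of a neighborhood basis of open (normal) subgroups in a profinite group. The only cosmetic difference is in how the no-small-subgroups lemma is justified---the paper uses the exponential map and a doubling trick (pick $X\in U$ with $2X\in U_1\setminus U$, then $\exp(2X)\notin\exp(U)$), while you use the logarithm and the escape of iterates; both are standard variants of the same idea.
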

\begin{proof}
It is well-known (see \cite{helgason}, ch.II, B.5) that there exists an open neighborhood of
identity  $U \subseteq \GL_{m}(\CC)$ which does not contain any non-trivial subgroup. 
Then $V:=\rho^{-1}(U)$ is an open subset of $G$ containing the identity. From the properties of profinite groups, we know that $V$ contains an open subgroup, say $H$. This implies that $\rho(H)=1$ and hence $H\leq \ker\rho$. Therefore $\ker\rho$ is open, thus ${\mathrm{Im}}(\rho)$ is finite.  
\end{proof}
We can now specialize this to $\SL_k(\ZZ_p)$. A similar result also holds for the symplectic group $\Sp_{2k}(\ZZ_p)$.

\begin{proposition}\label{factor}
Let $\rho:\SL_k(\ZZ_p)\rightarrow \GL_m(\CC)$ be a continuous non-trivial representation. Then $\rho$ factors through a non-trivial representation of $\SL_k(\Z{p^n})$ for some $n$.
\end{proposition}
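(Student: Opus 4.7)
The plan is to assemble Propositions~\ref{profinite1}, \ref{profinite2}, and \ref{profinite3}, which together hand us the conclusion almost for free. Recall the inverse system
\[
\SL_k(\ZZ_p) = \varprojlim \SL_k(\Z{p^n}),
\]
with canonical continuous surjections $\varphi_n : \SL_k(\ZZ_p) \to \SL_k(\Z{p^n})$ whose kernels are $K_n$, a descending chain of open normal subgroups with $\bigcap_n K_n = \{I\}$.

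First I would apply Proposition~\ref{profinite3} to the given continuous representation $\rho$. Since $\SL_k(\ZZ_p)$ is profinite, Proposition~\ref{profinite3} guarantees that $N := \ker \rho$ is an open (necessarily normal) subgroup of $\SL_k(\ZZ_p)$. Next, Proposition~\ref{profinite2}, applied to the inverse system $(\SL_k(\Z{p^n}), \varphi_n)$ of compact Hausdorff groups with open normal subgroup $N$, yields an index $n$ such that $K_n \subseteq N = \ker \rho$.

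Once $K_n \subseteq \ker \rho$, the representation $\rho$ descends through the quotient: there exists a unique map $\bar{\rho}: \SL_k(\ZZ_p)/K_n \to \GL_m(\CC)$ with $\rho = \bar{\rho} \circ \varphi_n$. Since $\varphi_n$ identifies $\SL_k(\ZZ_p)/K_n$ with $\SL_k(\Z{p^n})$, we obtain a (continuous, hence ordinary, since the target is finite) representation of $\SL_k(\Z{p^n})$ through which $\rho$ factors. Finally, $\bar{\rho}$ is non-trivial: if it were trivial, then $\rho = \bar{\rho} \circ \varphi_n$ would be trivial on all of $\SL_k(\ZZ_p)$, contradicting our hypothesis.

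There is no real obstacle here; the content of the proposition is entirely contained in the preceding three propositions. The only mild point to be careful about is invoking Proposition~\ref{profinite2} correctly: one must check that the inverse system $(\SL_k(\Z{p^n}), \varphi_{nm})$ consists of compact Hausdorff groups (each is finite discrete, hence trivially so), and that $N$ is open in $\SL_k(\ZZ_p)$, which is exactly the output of Proposition~\ref{profinite3}. The analogous argument applies verbatim to $\Sp_{2k}(\ZZ_p)$.
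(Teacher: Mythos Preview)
Your proof is correct and follows essentially the same route as the paper's own proof: apply Proposition~\ref{profinite3} to conclude $\ker\rho$ is open, then Proposition~\ref{profinite2} to find $K_n \le \ker\rho$, and descend to the quotient $\SL_k(\Z{p^n})$. Your write-up is slightly more careful in checking the hypotheses of Proposition~\ref{profinite2} and in justifying non-triviality of $\bar\rho$, but the argument is the same.
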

\begin{proof}
By Proposition~\ref{profinite3}, $\ker\rho$ is an open normal subgroup, which by Proposition~\ref{profinite2} contains $K_n$ as introduced in~\eqref{K_n-pro} for some $n \ge 1$.
Therefore $\rho$ factors through to a non-trivial representation $
\bar{\rho}:\SL_k(\Z{p^n})\rightarrow \GL_m(\CC)$.
\end{proof}

\section{Root functions}\label{root function}
We will now turn to establishing the lower bound for the representations of the groups $\SL_{k}(\Z{p^})$ and
$\Sp_{2k}(\Z{p^n})$.

\begin{definition}
Let $\mathcal{S}$ be a family of matrices in $M_d(\CC)$. For a function $r: \mathcal{S} \rightarrow \CC$,
define 
\[ V(r):=\{ v \in \CC^d:  Sv=r(S)v \quad \textrm{for all} \ S\in \mathcal{S} \}. \]
A map $r: \mathcal{S} \rightarrow \CC$ will be called a root of $\mathcal{S}$ if $V(r) \neq \{ 0 \}$. Moreover $V(r)$ is called a root subspace. 
\end{definition}
The following proposition is a special case of Theorem 15 in section 9.5. of~\cite{Linear}.
\begin{proposition}\label{roots}
Let $\mathcal{S}$ be a commuting family of $d\times d$ unitary matrices. Then $\mathcal{S}$ has only a
finite number of roots. If $r_1, \dots, r_t$ are all the distinct roots of $\mathcal{S}$ then
\begin{enumerate}
\item $V(r_i)$ is orthogonal to $V(r_j)$ for $i \neq j$.
\item $\CC^d=V(r_1) \oplus \cdots \oplus V(r_t)$.
\end{enumerate}
\end{proposition}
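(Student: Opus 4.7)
The plan is to prove the two enumerated claims independently and then deduce the finiteness assertion as a direct corollary of the decomposition in (2). For the orthogonality in (1), suppose $r_i \ne r_j$; by definition of distinct roots there exists some $S \in \mathcal{S}$ with $r_i(S) \ne r_j(S)$. Take $v \in V(r_i)$ and $w \in V(r_j)$ and compute $\langle Sv, w \rangle$ in two ways. Directly, $\langle Sv, w \rangle = r_i(S) \langle v, w \rangle$. On the other hand, since $S$ is unitary we have $S^{*} = S^{-1}$, and because $|r_j(S)| = 1$ the relation $Sw = r_j(S) w$ gives $S^{*} w = r_j(S)^{-1} w = \overline{r_j(S)}\, w$. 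Hence $\langle Sv, w \rangle = \langle v, S^{*} w \rangle = r_j(S) \langle v, w \rangle$. Subtracting the two expressions yields $(r_i(S) - r_j(S)) \langle v, w \rangle = 0$, which forces $\langle v, w \rangle = 0$.

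For (2), I would proceed by induction on $d$, using the spectral theorem for a single unitary matrix together with the fact that commuting operators preserve one another's eigenspaces. If every $S \in \mathcal{S}$ is a scalar multiple of the identity, then there is a unique root $r$ and $\CC^d = V(r)$, so there is nothing further to prove. Otherwise pick a non-scalar $S_0 \in \mathcal{S}$ and write $\CC^d = \bigoplus_{\lambda} E_\lambda$ as the orthogonal direct sum of the $S_0$-eigenspaces; since $S_0$ is non-scalar, $\dim E_\lambda < d$ for every $\lambda$. Because every $T \in \mathcal{S}$ commutes with $S_0$, each $E_\lambda$ is $\mathcal{S}$-invariant, and the restricted family $\mathcal{S}|_{E_\lambda}$ is again a commuting family of unitary operators on $E_\lambda$. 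By the inductive hypothesis each $E_\lambda$ decomposes orthogonally into root subspaces of $\mathcal{S}|_{E_\lambda}$, and each such root extends uniquely to a root of the full family on $\CC^d$ (with value $\lambda$ on $S_0$). Assembling the decompositions across all $\lambda$ produces the required orthogonal direct sum for $\CC^d$.

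The finiteness assertion then follows immediately: the decomposition above has at most $d$ non-zero summands, and conversely if $r$ is any root of $\mathcal{S}$ with nonzero $v \in V(r)$, expanding $v$ along the $V(r_k)$ and applying the orthogonality from (1) forces $v$ to lie entirely in a single $V(r_k)$, so $r$ must equal one of the $r_k$. The main obstacle I expect is bookkeeping in the inductive step, namely verifying that roots of the restricted families $\mathcal{S}|_{E_\lambda}$ glue together consistently into roots of the full $\mathcal{S}$, and that this enumeration exhausts all roots. Beyond that, the proof reduces to the spectral theorem for a single unitary matrix and the short bilinear-form computation above.
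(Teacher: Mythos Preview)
Your argument is correct. The orthogonality computation in (1) is the standard one, and the induction in (2) via the spectral theorem for a single unitary operator plus invariance of eigenspaces under commuting operators is exactly how one proves simultaneous diagonalizability of a commuting family of normal operators. The bookkeeping you flag is harmless: a root $r'$ of $\mathcal{S}|_{E_\lambda}$ determines a function on $\mathcal{S}$ via $S \mapsto r'(S|_{E_\lambda})$, and roots coming from distinct $E_\lambda$ are distinguished by their value on $S_0$.

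As for comparison with the paper: the paper does not prove this proposition at all. It simply records it as a special case of Theorem~15 in \S9.5 of Hoffman--Kunze, \emph{Linear Algebra}. So you have supplied a self-contained proof where the authors chose to cite; your argument is essentially the textbook proof that the citation points to.
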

\subsection{Root functions for the special linear groups}\label{root-SL}
Consider the following subgroups of $\SL_k(\Z{p^n})$:
\[
L:=\left\{ \begin{pmatrix} I_{k-1}& \sigma \\ 0 & 1
\end{pmatrix}: \sigma \in (\Z{p^n})^{k-1} \right\},\qquad
 H:=\left\{ \begin{pmatrix} 
T & 0 \\ 0 & 1 
\end{pmatrix}:  T \in \SL_{k-1}(\Z{p^n}) \right\}.
\]
Notice that $L$ is an abelian subgroup and $H$ normalizes $L$. Indeed we have
\begin{equation}\label{normal}
\begin{pmatrix} 
T & 0 \\ 0 & 1 
\end{pmatrix}\begin{pmatrix} I_{k-1}& \sigma \\ 0 & 1
\end{pmatrix}\begin{pmatrix} 
T & 0 \\ 0 & 1 
\end{pmatrix}^{-1}=  
\begin{pmatrix} I_{k-1}& T \sigma \\ 0 & 1
\end{pmatrix},
\end{equation}
which means that the action by conjugation of $H$ on $L$ is isomorphic to the standard
action of $\SL_{k-1}( \Z{p^n})$ on $\left(\Z{p^n}\right)^{k-1}$. Now, let 
$\rho: \SL_k(\Z{p^n})\rightarrow \GL_d(\CC)$ 
be a non-trivial representation. 
Note that $\mathcal{S}:=\rho(L)$ is a commuting set of $ d\times d$ matrices. Next proposition shows that $H$ acts on the root functions and the root subspaces of $\mathcal{S}$.
\begin{proposition}\label{actionroot}
Let $r$ be one of the roots in the decomposition in Proposition~\ref{roots} and let $h \in H$. For any $s=\rho(l)\in\mathcal{S}$, define 
$r_h(s):=r(\rho(hlh^{-1})).$
Then $r_h$ is also a root for $\mathcal{S}$, and $V(r_h)=\rho(h^{-1})V(r)$. 
\end{proposition}
\begin{proof}
First note that since $H$ normalizes $L$, the map $r_h$ is well-defined. 
For $w \in V(r)$ and $l \in L$, we have
\[  \rho(l)(\rho(h^{-1})w)=\rho(h^{-1})(\rho(hlh^{-1})w)=r(\rho(hlh^{-1}))\rho(h^{-1})w=r_h(\rho(l))(\rho(h^{-1})w).\]
This shows that $r_h$ is a root for $\mathcal{S}$, and $\rho(h^{-1})V(r)\subseteq V(r_h)$. To show the equality let $v\in V(r_h)$, then for any $l\in L$ we have 
$
\rho(l)(\rho(h)v)=\rho(h)(\rho(h^{-1}lh)v)=r(\rho(l))(\rho(h)v)
$,
therefore we have $\rho(h)V(r_h)\subseteq V(r)$.
\end{proof}
Let $e_{i}= I+ E_{in}$, where $E_{in}$ is the matrix with all entries zero
except for $(i,n)$th entry, which is $1$. 
The group $\SL_k(\Z{p^n})$ is generated by elementary matrices, and all elementary matrices are conjugate to $e_1$, defined above. So we have:
\begin{lemma}\label{tri-SL}
If $\rho(e_1)=I$, then $\rho$ is a trivial representation. 
\end{lemma}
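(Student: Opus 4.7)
The plan is to reduce the triviality of $\rho$ to a triviality-on-one-generator statement, using that $\SL_k(\Z{p^n})$ is generated by a single conjugacy class of elementary matrices. The argument has three steps.

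First, I would recall the standard fact that $\SL_k$ over any local ring is generated by the elementary transvections $t_{ij}(a) := I + a E_{ij}$ for $i \neq j$ and $a \in \Z{p^n}$. (Since $\Z{p^n}$ is local with residue field $\FF_p$, an arbitrary element of $\SL_k(\Z{p^n})$ can be row-reduced to the identity using such transvections; alternatively one can cite this as a well-known result about $\SL_k$ over local rings.)

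Second, I would verify that every such transvection $t_{ij}(a)$ lies in the normal closure of $e_1 = t_{1k}(1)$. The key observation is that for a permutation $\sigma \in S_k$, conjugating $e_1$ by the corresponding permutation matrix $P_\sigma$ yields $t_{\sigma(1)\sigma(k)}(1)$. When $\sigma$ is odd, one replaces $P_\sigma$ by $P_\sigma D$, where $D$ is a diagonal matrix with one entry equal to $-1$ (in a coordinate other than $1$ and $k$ when $k \geq 3$, or handled by the explicit $2\times 2$ Weyl element when $k=2$), so that the conjugating matrix lies in $\SL_k(\Z{p^n})$ while still moving $e_1$ to $\pm t_{ij}(1)$ for any chosen $i\neq j$. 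Then the identity $t_{ij}(1)^a = t_{ij}(a)$, which holds since $E_{ij}^2 = 0$ for $i\neq j$, produces every $t_{ij}(a)$ as an integer power of a conjugate of $e_1$ (or its inverse).

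Finally, I would conclude as follows. If $\rho(e_1) = I$, then for any $g \in \SL_k(\Z{p^n})$ the conjugate $\rho(g e_1 g^{-1}) = \rho(g)\, I\, \rho(g)^{-1} = I$, so $\rho$ is trivial on every conjugate of $e_1$. Combined with the second step, this forces $\rho(t_{ij}(a)) = I$ for all $i \neq j$ and $a \in \Z{p^n}$, and hence $\rho$ is trivial on the generators of $\SL_k(\Z{p^n})$, so $\rho$ is the trivial representation.

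The main point requiring care — and really the only non-formal step — is the second one, because it relies on the two structural facts that elementary transvections generate $\SL_k(\Z{p^n})$ and that they form a single conjugacy class inside this group (up to sign, compensated by inversion). Both are classical; the lemma itself is then essentially a formal consequence.
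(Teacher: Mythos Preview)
Your proof is correct and follows essentially the same route as the paper, which simply asserts in one sentence that $\SL_k(\Z{p^n})$ is generated by elementary matrices and that these are all conjugate to $e_1$. You have just supplied the details the paper omits: the sign adjustment on permutation matrices to stay inside $\SL_k$, the $k=2$ case via the Weyl element, and the observation $t_{ij}(1)^a = t_{ij}(a)$ to pass from unit to arbitrary parameter.
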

Now let $\rho$ be a faithful representation then we claim the following. 
\begin{lemma}\label{mainlemma-SL}
There exists a root $r$ for $\mathcal{S}$, such that $r(\rho(e_1))=\zeta$, where $\zeta$ is a primitive $p^n$-th root of unity. 
\end{lemma}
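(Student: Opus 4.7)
The plan is to argue directly by contradiction, exploiting that $e_1$ lies in the abelian group $L$ and has order exactly $p^n$ in $\SL_k(\Z{p^n})$. After an initial reduction by unitarization (every representation of the finite group $\SL_k(\Z{p^n})$ is equivalent to a unitary one, which does not affect the conclusion since roots are intrinsic to the action), the commuting family $\mathcal{S}=\rho(L)$ is a commuting family of unitary matrices, so Proposition~\ref{roots} applies and produces the root-space decomposition
\begin{equation*}
\CC^d = V(r_1) \oplus \cdots \oplus V(r_t).
\end{equation*}

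Observe that $e_1$ is precisely the element $l_{\sigma_0}\in L$ with $\sigma_0=(1,0,\dots,0)^\top$, and its order in $\SL_k(\Z{p^n})$ equals $p^n$ (since $e_1^m=l_{m\sigma_0}$ is trivial iff $p^n\mid m$). First I would record the trivial observation that for every root $r$ the scalar $r(\rho(e_1))$ is a $p^n$-th root of unity, because $\rho(e_1)^{p^n}=I$ forces $r(\rho(e_1))^{p^n}=1$.

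Now suppose for contradiction that none of the roots $r_1,\dots,r_t$ satisfies $r_i(\rho(e_1))=\zeta$ with $\zeta$ of exact order $p^n$. Then each $r_i(\rho(e_1))$ is a $p^{n-1}$-th root of unity, so $r_i(\rho(e_1^{p^{n-1}}))=r_i(\rho(e_1))^{p^{n-1}}=1$ for every $i$. Decomposing an arbitrary $v\in\CC^d$ as $v=\sum v_i$ with $v_i\in V(r_i)$, this gives $\rho(e_1^{p^{n-1}})v = \sum r_i(\rho(e_1^{p^{n-1}}))v_i = v$, and hence $\rho(e_1^{p^{n-1}})=I$. But $e_1^{p^{n-1}}\neq I$ in $\SL_k(\Z{p^n})$, contradicting the hypothesis that $\rho$ is faithful; the desired root therefore exists.

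The only step that requires any care is the passage from ``each $r_i(\rho(e_1))$ has order dividing $p^{n-1}$'' to ``$\rho(e_1^{p^{n-1}})=I$'', but this is exactly what the orthogonal direct sum decomposition of Proposition~\ref{roots} is designed to furnish, so I do not anticipate a genuine obstacle. The argument uses faithfulness of $\rho$ in an essential way (and nowhere uses the $H$-action from Proposition~\ref{actionroot}, which will presumably be needed only in the subsequent lemma bounding the dimension $d$).
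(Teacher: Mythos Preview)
Your proof is correct and follows essentially the same route as the paper: assume no root gives a primitive $p^n$-th root of unity at $\rho(e_1)$, use the root-space decomposition from Proposition~\ref{roots} to conclude $\rho(e_1^{p^{n-1}})=I$, and contradict faithfulness via the fact that $e_1$ has order $p^n$. The only cosmetic difference is that you make the unitarization step explicit, which the paper leaves implicit when invoking Proposition~\ref{roots}.
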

\begin{proof}
Let us denote the roots of $\mathcal{S}$ by $r_1,\dots, r_t$. Assume the contrary that for all $1\leq i\leq t$ we have $r_i(\rho(e_1))=\zeta_{p^n}^{m_i}$, where $p\mid m_i$. We will show that $\rho(e_1^{p^{n-1}})=I$, which is a contradiction since $\rho$ is a faithful representation and the order of $e_1$ is $p^n$.  By Proposition~\ref{roots} we have the decomposition 
$\CC^d=V(r_1) \oplus \cdots \oplus V(r_t).$
For $v\in \CC^d$, write $v=v_1+\dots+v_t$,
where $v_i\in V(r_i)$. Then, for any $m \in \ZZ$
\begin{equation*}
(\rho(e_1))^mv =\zeta_{p^n}^{m_1m}v_1+\dots+\zeta_{p^n}^{m_tm}v_t.
\end{equation*}
In particular, for $m=p^{n-1}$ we have $\rho(e_1^{p^{n-1}})v=v_1+\dots+v_t=v$, hence $\rho(e_1^{p^{n-1}})=I$. 
\end{proof}
\begin{proof}[Proof of Theorem~\ref{minimal-degree} for $m_f\left(\SL_k(\Z{p^n})\right)$ when $k\geq 3$:]
Let
$\rho: \SL_k(\Z{p^n})\rightarrow \GL_d(\CC),$
be a faithful representational. First note that $L$ as an abstract group is isomorphic to the direct sum of $k-1$ copies of the 
cyclic group $\Z{p^n}$ and is generated by $e_1, \dots , e_{k-1}$.
We will occasionally deviate from our standard notation for the group operation and use additive notation
for group operation on $L$, when this isomorphism is used. For instance, we will write $e_1+e_2$ instead
of $e_1 \cdot e_2$. 

By Lemma~\ref{mainlemma-SL} there is a root $r$ for $\mathcal{S}$ such that $r(\rho(e_1))=\zeta_{p^n}^{m_1}$, where $\gcd(m_1,p)=1$.
We also assume that for $2 \le i \le k-1$ we have $r(\rho(e_i))=\zeta_{p^n}^{m_i}$ where
$0 \le m_i \le p^{n}-1$. For $t\in (\Z{p^n})^*$ and $a_2, \dots ,a_{k-1} \in \Z{p^n}$ whose values will be assigned later, define
\[ \alpha:=\alpha(t,a_2,\cdots,a_{k-1})=
\left(
{\begin{array}{*{20}c}
   t & {a_1 } &  \cdots  & {a_{k - 1} } &\vline &  0  \\
   0 & {t^{ - 1} } & {} & {} &\vline &   \vdots   \\
   0 & 0 & {I_{k - 3}} & { } &\vline &  0  \\
    \vdots  &  \vdots  & {} & {} &\vline &  0  \\
\hline
   0 &  \cdots  & 0 & 0 &\vline &  1  \\
\end{array}}\right)
\in H \]

Using~\eqref{normal}, a simple computation shows that 
\[ \alpha e_1\alpha^{-1}=t e_1,\quad \alpha e_2\alpha^{-1}=t^{-1}e_2+a_2 e_1, \quad \alpha e_i\alpha^{-1}=e_i+a_i e_1 \quad (3 \le i \le k-1) .\]
By Proposition~\ref{actionroot}, we have $r_{t,a_2,\cdots,a_{k-1}}:=r_{\alpha}$ is a root and 
\begin{equation}\label{eq1}
\begin{split}
 r_\alpha(\rho(e_1))&=r(\rho(\alpha e_1\alpha^{-1}))=r(\rho(te_1))=\zeta_{p^n}^{tm_1}, \\
r_\alpha(\rho(e_2))&=r(\rho(\alpha e_2\alpha^{-1}))=r(\rho(t^{-1}e_2+a_2 e_1))=\zeta_{p^n}^{t^{-1}m_2+a_2m_1},\\
 r_\alpha(\rho(e_i))&=r(\rho(\alpha e_i\alpha^{-1}))=r(\rho(e_i+a_i e_1))=\zeta_{p^n}^{m_i+a_im_1} \quad (3 \le i \le k-1).
\end{split}
\end{equation}
Now, since $\gcd(m_1,p)=1$, by varying the values of 
$t,a_2, \dots , a_{k-1}$ we can get at least 
$$\varphi(p^n)p^{(k-2)n}=(p^n-p^{n-1})p^{(k-2)n},$$
different roots. This shows that the dimension of the representation space has to be at least 
$$(p^n-p^{n-1})p^{(k-2)n}.$$ 
\end{proof}
Similarly, we have
\begin{lemma}
Let $\rho$ be a non-trivial representation. Then, there exists a root $r$ for $\mathcal{S}$ such that $r(\rho(e_1))=\zeta_{p^n}^{m_1}$, where $m_1$ is non-zero in $\Z{p^n}$.
\end{lemma}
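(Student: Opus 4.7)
The plan is to argue by contraposition: I will assume that \emph{every} root $r_i$ of $\mathcal{S}=\rho(L)$ satisfies $r_i(\rho(e_1))=1$ and deduce that $\rho$ must then be trivial, which contradicts the hypothesis of the lemma.

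First I would observe that, by Proposition~\ref{factor}, $\rho$ factors through a representation of the finite group $\SL_k(\Z{p^n})$, so a standard averaging argument allows us to replace $\rho$ by an equivalent unitary representation. Then $\mathcal{S}=\rho(L)$ is a commuting family (since $L$ is abelian) of unitary matrices, so Proposition~\ref{roots} applies and yields the orthogonal decomposition
\[
\CC^d = V(r_1)\oplus\cdots\oplus V(r_t),
\]
where $r_1,\dots,r_t$ are all the roots of $\mathcal{S}$.

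Next, under the standing (contrapositive) assumption that $r_i(\rho(e_1))=1$ for every $i$, the operator $\rho(e_1)$ acts as the identity on each summand $V(r_i)$. Since these summands span $\CC^d$, this forces $\rho(e_1)=I$. Lemma~\ref{tri-SL} then immediately gives that $\rho$ is trivial, contradicting our hypothesis. Thus some root $r$ must satisfy $r(\rho(e_1)) = \zeta_{p^n}^{m_1}$ with $m_1\not\equiv 0\pmod{p^n}$, as required.

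There is no real obstacle in this argument; unlike the faithful case treated in Lemma~\ref{mainlemma-SL}, we only need to exclude $r(\rho(e_1))=1$ rather than produce a \emph{primitive} $p^n$-th root of unity, so the root-space decomposition together with the generation statement of Lemma~\ref{tri-SL} is enough. The only minor point to verify is that the image of $L$ consists of unitary (equivalently, diagonalizable with unit-modulus eigenvalues) matrices, which is ensured by the finiteness of $\mathrm{Im}(\rho)$ established in Proposition~\ref{profinite3}.
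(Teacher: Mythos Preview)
Your argument is correct and follows essentially the same route as the paper: assume all roots send $\rho(e_1)$ to $1$, use the root-space decomposition to conclude $\rho(e_1)=I$, and then invoke Lemma~\ref{tri-SL} to derive a contradiction. The only minor remark is that the appeal to Proposition~\ref{factor} is superfluous here, since in this section $\rho$ is already a representation of the finite group $\SL_k(\Z{p^n})$; the unitarizability follows directly from finiteness of that group.
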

\begin{proof}
If for all roots we have $r_i(\rho(e_1))=1$, then similar to the proof of Lemma~\ref{mainlemma-SL} we can deduce that $\rho(e_1)=I$. But by Lemma~\ref{tri-SL} we saw that if $\rho(e_1)=I$ then $\rho$ is a trivial representation. That is a contradiction. 
\end{proof}
\begin{proof}[Proof of Theorem~\ref{minimal-degree} for $m\left(\SL_k(\Z{p^n})\right)$ when $k\geq 3$:]
Let $\rho$
be a non-trivial representation of $\SL_k(\Z{p^n})$ in $\GL_d(\CC)$. By Lemma~\ref{tri-SL}, $\rho(e_1)\neq I$ when $\rho$ is not a trivial representation. 
With the same notation used in the previous proof, we obtain the following identities similar to~(\ref{eq1}). 
\begin{equation}\label{eq2}
\begin{split}
 r_\alpha(\rho(e_1))&=r(\rho(\alpha e_1\alpha^{-1}))=r(\rho(te_1))=\zeta_{p^n}^{tm_1}, \\
r_\alpha(\rho(e_2))&=r(\rho(\alpha e_2\alpha^{-1}))=r(\rho(t^{-1}e_2+a_2 e_1))=\zeta_{p^n}^{t^{-1}m_2+a_2m_1},\\
 r_\alpha(\rho(e_i))&=r(\rho(\alpha e_i\alpha^{-1}))=r(\rho(e_i+a_i e_1))=\zeta_{p^n}^{m_i+a_im_1} \quad (3 \le i \le k-1).
\end{split}
\end{equation}
The only difference is that $m_1$ is here only non-zero in $\Z{p^n}$. So by varying the values of $t,a_2, \dots , a_{k-1}$ we can get at least $p^{k-1}-p^{k-2}$  different roots.
\end{proof}
For $\SL_2(\Z{p^n})$ this method does not works. Instead we present a different proof.
\begin{proof}[ Proof of Theorem~\ref{minimal-degree} for $m_f\left(\SL_2(\Z{p^n})\right)$:]
Let $\rho: \SL_2(\Z{p^n}) \rightarrow \GL_d(\mathbb{C})$,
be a faithful representation of the group $\SL_2(\Z{p^n})$. Set 
$
a:=
\begin{pmatrix}
1 & 1\\
0 & 1
\end{pmatrix}
$,
and let $A:=\rho(a)\neq I$. Since the order of $a$ is $p^n$ and $\rho$ is a faithful representation, therefore $A$ has a non-trivial eigenvalue, say $\zeta$, which is a primitive $p^n$-th root of unity, since otherwise $A^{p^{n-1}}=I,$ which is a contradiction. 
Notice that $a$ is conjugate to $a^m$, where $m$ is a square in $\left(\mathbb{Z}/(p^n\mathbb{Z})\right)^*$. Hereafter $m$ will be an arbitrary quadratic residue in $\mathbb{Z}/(p^n\mathbb{Z})$.  This implies that $A$ and $A^m$ would have the same set of eigenvalues. But $\zeta^m$ is an eigenvalue of $A^m$. The number of square elements in $\left(\mathbb{Z}/(p^n\mathbb{Z})\right)^*$ is $\varphi(p^n)/2$. Therefore $A$ has at least $\varphi(p^n)/2$ different eigenvalues. So $d\geq \frac{\varphi(p^n)}{2}.$
\end{proof}
For $m\left(\SL_2(\Z{p^n})\right)$, the same method gives the bound $(p-1)/2$.
\subsection{Root functions for the symplectic groups}\label{root-Sp}
Let $J$ denote the $2k\times 2k$ matrix 
$$J:=
\begin{pmatrix}
0 & I_k\\
-I_k & 0
\end{pmatrix}.$$
One way of defining the symplectic group is by:
$$
\Sp_{2k}(\Z{p^n}):=\left\{A\in \GL_{2k}(\Z{p^n}): AJA^T=J\right\}.
$$
We will use two types of symplectic matrices:
First, for any symmetric $k \times k$ matrix $\sigma$ and any invertible $k \times k$ matrix $ \alpha$,
set
$$U_{\sigma}:=\begin{pmatrix}
I_k & \sigma\\
0 & I_k
\end{pmatrix}, \quad D_{ \alpha}:= \begin{pmatrix}
\alpha & 0\\
0 &\tilde{\alpha}
\end{pmatrix}.$$
For every invertible $k \times k$ matrix $ \alpha$, set the $2k \times 2k$ matrix
where $\tilde{ \alpha}=(\alpha^{-1})^T$. 
It is easy to see that $U_{\sigma}$ and $D_{\alpha}$ defined as above are both symplectic.
From here and the fact that the set
$$
\left\{ U_{\sigma}: \sigma \in M_{k}(\ZZ), \sigma=\sigma^T \right\} \cup 
\left\{ J \right\},
$$
is a generating set for $\Sp_{2k}(\ZZ)$ (See~\cite{Rege} Section $\S 5$, Proposition 2)
and the surjectivity of the reduction map, $
\Sp_{2k}(\ZZ)\rightarrow \Sp_{2k}(\Z{p^n})$,
(See~\cite{Newman} Theorem VII.21), we can deduce that the same set reduced mod $p^n$ generates 
$\Sp_{2k}(\Z{p^n}).$
Since $U_{I_k} U_{-I_k}^T U_{I_k}=J$, then the set
\[ 
\left\{ U_{\sigma}, U^T_{\sigma}: \sigma \in M_{k}(\Z{p^n}) , \sigma=\sigma^T \right\}, \]
is also a generating set for $\Sp_{2k}(\Z{p^n})$.
(See also~\cite{Serre-Milnor}, Chapter III). Notice that for a symmetric matrix $\sigma\in M_k(\Z{p^n})$, we have $ JU_{\sigma}J^{-1}=U^T_{-\sigma}$. 
Consider the following subgroups of $\Sp_{2k}(\Z{p^n})$:
$$
L:=\left\{ U_{\sigma}: \sigma=\sigma^T\right\}, \quad H:= \left\{ D_{ \alpha}: \alpha \in \GL_k( \Z{p^n}) \right\}.$$
Notice that $L$ is an abelian group and $H$ acts on $L$ via conjugation. In fact we have,
\begin{equation}\label{matrix-mul}
D_{ \alpha} U_{ \alpha} D_{ \alpha}^{-1}=U_{ \alpha \sigma \alpha^{_T}}.
\end{equation}
This shows that the action of $H$ on $L$ is the standard action of the general linear group on symmetric quadratic 
forms. 
For $1\leq i,j\leq k$, $E_{ij}$ denotes the symmetric $k \times k$ matrix such that the $(i,j)$ and $(j,i)$ entires are $1$ and other entries are zero. Also set $G_{ij}:=U_{E_{ij}}$.
Now, it is easy to see that if $i_1 \neq j_1$ and $i_2 \neq j_2$ then the matrices $G_{i_1j_1}$ and $G_{i_2j_2}$ are conjugate. Similarly, the matrices $G_{ii}$ and $G_{jj}$ are conjugate for all $i,j$.  

\begin{lemma}\label{trivial-repre-Sp}
Let $\rho$ be a representation of $\Sp_{2k}(\Z{p^n})$, so that 
$
\rho(G_{11})=\rho(G_{12})=I$,
then $\rho$ is a trivial representation. 
\end{lemma}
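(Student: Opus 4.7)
The plan is to propagate the two vanishing conditions on $\rho$ through the group relations already recorded in the excerpt until we exhaust a known generating set of $\Sp_{2k}(\Z{p^n})$. We already know from the discussion just before the lemma that $\{U_\sigma,\,U_\sigma^T:\sigma\in M_k(\Z{p^n}),\ \sigma=\sigma^T\}$ is a generating set, so it suffices to show that $\rho(U_\sigma)=\rho(U_\sigma^T)=I$ for every symmetric $\sigma$.

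First I would exploit the conjugacy relations among the $G_{ij}$ pointed out right above the lemma. Since any $G_{ij}$ with $i\neq j$ is conjugate to $G_{12}$ (realised concretely by choosing a suitable permutation $D_\alpha\in H$, using the identity $D_\alpha U_\sigma D_\alpha^{-1}=U_{\alpha\sigma\alpha^T}$ from~(\ref{matrix-mul})), the hypothesis $\rho(G_{12})=I$ gives $\rho(G_{ij})=I$ for all $i\neq j$. Similarly, all diagonal $G_{ii}$ are mutually conjugate, so $\rho(G_{11})=I$ yields $\rho(G_{ii})=I$ for every $i$.

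Next I would use the abelian structure of $L$. Because $U_\sigma U_\tau=U_{\sigma+\tau}$, the subgroup $L$ is isomorphic to the additive group of symmetric matrices, and each $U_\sigma$ factors as a product of powers of the $G_{ij}$ (with $i\leq j$) according to the entries of $\sigma$. Therefore $\rho(U_\sigma)=\prod_{i\leq j}\rho(G_{ij})^{\sigma_{ij}}=I$ for every symmetric $\sigma$. To pass from $L$ to its ``transpose'' companion, I would use the relation $JU_\sigma J^{-1}=U_{-\sigma}^{T}$ already recorded above the lemma: this gives
\[
\rho(U_{-\sigma}^T)=\rho(J)\rho(U_\sigma)\rho(J)^{-1}=\rho(J)\cdot I\cdot\rho(J)^{-1}=I,
\]
for every symmetric $\sigma$, so $\rho(U_\tau^T)=I$ for all symmetric $\tau$ as well. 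Since the union of these two families generates $\Sp_{2k}(\Z{p^n})$, the representation $\rho$ is trivial.

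I do not anticipate a serious obstacle: the work is essentially bookkeeping on top of the conjugacy facts and generation statements the authors have already assembled. The only mildly subtle point is the $J$-conjugation step, where one must resist the temptation to assert $\rho(J)=I$ directly; the argument instead uses that $\rho(J)$ is an \emph{invertible} operator centralising $\rho(U_\sigma)=I$, which is automatic.
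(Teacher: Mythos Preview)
Your argument is correct and is exactly the one the paper intends: the lemma is stated there without proof, precisely because the surrounding discussion (the conjugacy of the $G_{ij}$, the abelian structure $U_\sigma U_\tau=U_{\sigma+\tau}$ of $L$, the relation $JU_\sigma J^{-1}=U_{-\sigma}^T$, and the generating set $\{U_\sigma,U_\sigma^T\}$) already contains every ingredient you use. There is nothing to add.
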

Let $a_i\in \Z{p^n}$ and $t\in\left(\Z{p^n}\right)^*$, define  
\begin{equation}\label{alpha-Sp}
\alpha=\alpha_{t,a_1,\dots,a_{k-1}}:= \left( {\begin{array}{*{20}c}
   t & {a_1 } & {a_2 } &  \cdots  & {a_{k - 1} }  \\
   0 & 1 & 0 & {} & 0  \\
   0 & 0 & 1 & {} & 0  \\
    \vdots  & {} & {} &  \ddots  &  \vdots   \\
   0 & 0 & 0 &  \cdots  & 1  \\
 \end{array} } \right)\in \GL_k(\Z{p^n}),
\end{equation}
where the only possibly nonzero entries appear in the first row and on the diagonal.

The proof of the following lemma is quite straightforward:
\begin{lemma}\label{mat-iden} With $ \alpha$ defined as in~(\ref{alpha-Sp}), we have
\begin{equation}\label{identity-symp}
\begin{split}
D_{ \alpha}G_{11}D_{ \alpha}^{-1}=G_{11}^{t^2},\quad
D_{ \alpha}G_{1j}D_{ \alpha}^{-1}&=G_{11}^{2ta_{j-1}}G_{1j}^t \qquad\qquad\hspace{.6cm}\qquad (2\leq j\leq k)\\
D_{ \alpha}G_{22}D_{ \alpha}^{-1}=G_{11}^{{a_1}^2}G_{12}^{a_1} G_{22},\quad
D_{ \alpha}G_{2j}D_{ \alpha}^{-1}&=G_{11}^{2a_1a_{j-1}}G_{1j}^{a_1}G_{12}^{a_{j-1}}G_{2j}\qquad\quad (3\leq j\leq k).
\end{split}
\end{equation}
\end{lemma}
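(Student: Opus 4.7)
The plan is to reduce every identity to the single conjugation formula $D_{\alpha} U_{\sigma} D_{\alpha}^{-1} = U_{\alpha \sigma \alpha^T}$ already recorded in~(\ref{matrix-mul}), combined with the obvious homomorphism property $U_{\sigma+\tau} = U_{\sigma} U_{\tau}$ of the map $\sigma \mapsto U_{\sigma}$ from the additive group of symmetric matrices in $M_k(\Z{p^n})$ into $\Sp_{2k}(\Z{p^n})$. Since $G_{ij} = U_{E_{ij}}$, each of the four identities in the lemma amounts to computing $\alpha E_{ij} \alpha^T$ and then decomposing the result as an integer linear combination of the symmetric matrices $E_{11}, E_{12}, E_{1j}, E_{22}, E_{2j}$; the coefficients in that decomposition are exactly the exponents appearing on the right hand side.

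To carry out the computation I would use the outer product description of $E_{ij}$, namely $E_{ii} = e_i e_i^T$ and $E_{ij} = e_i e_j^T + e_j e_i^T$ for $i \ne j$, so that
\begin{equation*}
\alpha E_{ij} \alpha^T = (\alpha e_i)(\alpha e_j)^T + (\alpha e_j)(\alpha e_i)^T
\quad (i \ne j), \qquad
\alpha E_{ii} \alpha^T = (\alpha e_i)(\alpha e_i)^T.
\end{equation*}
The columns of the matrix $\alpha$ defined in~(\ref{alpha-Sp}) are particularly transparent: $\alpha e_1 = t e_1$ and $\alpha e_j = a_{j-1} e_1 + e_j$ for $2 \le j \le k$. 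Plugging these in and expanding each rank-one product then gives, for example, $\alpha E_{11} \alpha^T = t^2 E_{11}$, $\alpha E_{1j} \alpha^T = 2 t a_{j-1} E_{11} + t E_{1j}$, $\alpha E_{22} \alpha^T = a_1^2 E_{11} + a_1 E_{12} + E_{22}$, and $\alpha E_{2j} \alpha^T = 2 a_1 a_{j-1} E_{11} + a_1 E_{1j} + a_{j-1} E_{12} + E_{2j}$. Applying $U$ to each identity and using $U_{\sigma + \tau} = U_{\sigma} U_{\tau}$ produces the four stated formulas.

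No genuine obstacle is anticipated; the argument is a straightforward bookkeeping exercise. The only small points to mind are (i) that the order in which the $G_{pq}$'s appear on the right hand side is immaterial, since the subgroup $L$ is abelian, and (ii) that the factor $2$ in the exponents of $G_{11}$ in the first and fourth identities comes from the symmetrization that makes $E_{ij}$ (for $i \ne j$) a sum of two rank-one terms. With the column formulas for $\alpha$ in hand, all four identities drop out by direct expansion.
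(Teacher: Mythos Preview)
Your proposal is correct and is exactly the direct computation the paper has in mind when it declares the lemma ``quite straightforward'' without further argument: reduce to the conjugation formula~(\ref{matrix-mul}), compute $\alpha E_{ij}\alpha^T$ via the column identities $\alpha e_1=te_1$, $\alpha e_j=a_{j-1}e_1+e_j$, and read off the exponents from the additive decomposition. One tiny slip: the factor $2$ you mention appears in the \emph{second} and fourth identities, not the first and fourth.
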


The proof of the following propositions are similar to Proposition~\ref{actionroot} and Lemma~\ref{mainlemma-SL}:
\begin{proposition}\label{actionroot-Sp}
Let $r$ be one of the roots in the decomposition in Proposition~\ref{roots} and let $h \in H$. For any $s=\rho(l)\in\mathcal{S}$, define 
$$r_h(s):=r(\rho(hlh^{-1})).$$
Then $r_h$ is also a root for $\mathcal{S}$, and $V(r_h)=\rho(h^{-1})V(r)$. 
\end{proposition}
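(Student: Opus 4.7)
The plan is to mimic, essentially verbatim, the proof of Proposition~\ref{actionroot}; the only thing that needs an independent check is that the data here satisfy the same structural hypothesis, namely that $H$ normalizes $L$. For the symplectic setup this is immediate from the identity~(\ref{matrix-mul}): for any $\alpha \in \GL_k(\Z{p^n})$ and symmetric $\sigma$, $D_{\alpha} U_{\sigma} D_{\alpha}^{-1} = U_{\alpha \sigma \alpha^{T}}$, and $\alpha \sigma \alpha^{T}$ is again symmetric. Thus conjugation by $h \in H$ sends $L$ into itself, so for $l \in L$ the element $hlh^{-1}$ lies in $L$, $\rho(hlh^{-1}) \in \mathcal{S}$, and $r_h(s) := r(\rho(hlh^{-1}))$ is well-defined on $\mathcal{S}$.

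Next, I would show that $r_h$ is indeed a root and $\rho(h^{-1}) V(r) \subseteq V(r_h)$. For $w \in V(r)$ and $l \in L$, a direct manipulation gives
\[
\rho(l)\bigl(\rho(h^{-1})w\bigr) = \rho(h^{-1})\rho(hlh^{-1}) w = r(\rho(hlh^{-1}))\,\rho(h^{-1})w = r_h(\rho(l))\,\rho(h^{-1})w,
\]
so $\rho(h^{-1})w \in V(r_h)$. In particular, since $V(r) \neq \{0\}$, the space $V(r_h)$ is nonzero, confirming that $r_h$ is a root of $\mathcal{S}$.

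For the reverse inclusion $V(r_h) \subseteq \rho(h^{-1}) V(r)$, equivalently $\rho(h) V(r_h) \subseteq V(r)$, take $v \in V(r_h)$. For any $l \in L$ set $l' = h^{-1} l h$, which again lies in $L$ by the normalization property. Then
\[
r_h(\rho(l')) = r(\rho(h l' h^{-1})) = r(\rho(l)),
\]
and hence $\rho(l') v = r(\rho(l)) v$. Consequently,
\[
\rho(l)\bigl(\rho(h) v\bigr) = \rho(h)\,\rho(l') v = \rho(h)\,r(\rho(l))\,v = r(\rho(l))\,\rho(h) v,
\]
so $\rho(h) v \in V(r)$, which gives the desired inclusion and completes the proof.

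I do not anticipate any substantive obstacle: the only content beyond Proposition~\ref{actionroot} is the normalization $D_{\alpha} L D_{\alpha}^{-1} = L$, which is encoded in~(\ref{matrix-mul}) together with the symmetry of $\alpha \sigma \alpha^{T}$. Once this is noted, the computation is formally identical to the $\SL_k$ case. If anything, one should be mindful that $\mathcal{S} = \rho(L)$ here refers to the symplectic $L$ (not the one from Section~\ref{root-SL}), but this is purely a matter of keeping notation consistent.
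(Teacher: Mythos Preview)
Your proposal is correct and is precisely the approach the paper intends: the paper does not spell out a separate proof but simply states that the argument is ``similar to Proposition~\ref{actionroot},'' which is exactly what you have carried out, with the only additional observation being that $H$ normalizes $L$ via~(\ref{matrix-mul}).
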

 
\begin{lemma}\label{mainlemma-Sp}
When $\rho$ is a faithful representation, then there exists a root $r$ for $\mathcal{S}$, such that $r(\rho(G_{11}))=\zeta$, where $\zeta$ is a primitive $p^n$-th root of unity. 
\end{lemma}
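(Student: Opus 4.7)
The plan is to mimic the proof of Lemma~\ref{mainlemma-SL} almost verbatim, replacing $e_1$ by $G_{11}$. Since $\rho(L)$ is a commuting family (because $L$ is abelian) and, by averaging, can be assumed unitary, Proposition~\ref{roots} gives an orthogonal decomposition $\CC^d = V(r_1) \oplus \cdots \oplus V(r_t)$ into root subspaces of $\mathcal{S} = \rho(L)$.

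First I would record the order of $G_{11}$. Since $G_{11} = U_{E_{11}}$ and powers act additively on the symmetric block, $G_{11}^{m} = U_{mE_{11}}$; in $\Sp_{2k}(\Z{p^n})$ this equals $I$ iff $p^n \mid m$, so $G_{11}$ has order exactly $p^n$. In particular $G_{11}^{p^{n-1}} \neq I$, and by faithfulness $\rho(G_{11}^{p^{n-1}}) \neq I$.

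Next I would argue by contradiction. Assume every root $r_i$ satisfies $r_i(\rho(G_{11})) = \zeta_{p^n}^{m_i}$ with $p \mid m_i$. For an arbitrary $v \in \CC^d$, write $v = v_1 + \cdots + v_t$ with $v_i \in V(r_i)$. Then, as in the proof of Lemma~\ref{mainlemma-SL},
\[
\rho(G_{11})^{p^{n-1}} v \;=\; \sum_{i=1}^t r_i(\rho(G_{11}))^{p^{n-1}} v_i \;=\; \sum_{i=1}^t \zeta_{p^n}^{p^{n-1} m_i}\, v_i \;=\; \sum_{i=1}^t v_i \;=\; v,
\]
because $p \mid m_i$ forces $\zeta_{p^n}^{p^{n-1} m_i} = 1$. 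Hence $\rho(G_{11}^{p^{n-1}}) = I$, contradicting the previous paragraph.

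There is no real obstacle here: the only point to be careful about is that $G_{11}$ genuinely has order $p^n$ in $\Sp_{2k}(\Z{p^n})$ (so that faithfulness produces the required contradiction), and that $\rho(L)$ may be taken unitary so Proposition~\ref{roots} applies. Both are immediate. The argument works identically for any single generator of $L$ of order $p^n$; the choice $G_{11}$ is made because it is the one that will be manipulated via Lemma~\ref{mat-iden} in the subsequent dimension count.
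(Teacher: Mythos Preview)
Your proposal is correct and is exactly the approach the paper takes: the paper simply states that the proof is similar to that of Lemma~\ref{mainlemma-SL}, and your argument is precisely that analogue, with $G_{11}$ in place of $e_1$ and the observation that $G_{11}$ has exact order $p^n$ supplying the contradiction with faithfulness.
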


We are ready to prove Theorem~\ref{minimal-degree} for $\Sp_{2k}(\Z{p^n})$. 
\begin{proof}[ Proof of Theorem~\ref{minimal-degree} for $m_f\left(\Sp_{2k}(\Z{p^n})\right)$:] For this specific matrix $ \alpha$ defined in~(\ref{alpha-Sp}), we use the shorthand 
$\overline{\alpha}:=D_{\alpha}$.
Let $
\rho: \Sp_{2k}(\Z{p^n})\rightarrow \GL_d(\CC),
$
be a faithful representation and set $\mathcal{S}=\rho(L)$.
Pick a root $r$ for $\mathcal{S}$ such that $r(\rho(G_{11}))=\zeta_{p^n}^{m}$, where $\gcd(m,p)=1$. Such a root exists by Lemma~\ref{mainlemma-Sp}. For this root, let $r(\rho(G_{1j}))=\zeta_{p^n}^{m_j}$, for $2\leq j\leq k$. With the same notation in Proposition~\ref{actionroot-Sp} and~(\ref{identity-symp}) we have
\begin{equation}
r_{\overline{\alpha}}(\rho(G_{11}))=\zeta_{p^n}^{t^2m},\quad
r_{\overline{\alpha}}(\rho(G_{1j}))=\zeta_{p^n}^{2a_{j-1}tm+tm_j},\quad (2\leq j\leq k).
\end{equation}
Notice that the number of different square in $\left(\Z{p^n}\right)^*$ is $\varphi(p^n)/2$. So by varying $t,a_1,\dots,a_{k-1}$, we will obtain at least $(p^n-p^{n-1})p^{(k-1)n}/2$ different roots. 
\end{proof}
\begin{proof}[ Proof of Theorem~\ref{minimal-degree} for $m\left(\Sp_{2k}(\Z{p^n})\right)$:] 
Now let $\rho: \Sp_{2k}(\Z{p^n})\rightarrow \GL_d(\CC)$,
be a non-trivial representation. 
Since $\rho$ is non-trivial, at least one of $\rho(G_{11})$ and $\rho(G_{12})$ are different from the identity. We split the proof into two cases:

\noindent
{\it Case I:} Let $\rho(G_{11})\neq I$. This implies that there exists a root $r$ for $\mathcal{S}$ such that $r(\rho(G_{11}))=\zeta_{p^n}^{m}$,
where $m\neq 0$ in $\Z{p^n}$. For this root let $r(\rho(G_{1i}))=\zeta_{p^n}^{m_i}$ for $2\leq i\leq k$. So
~(\ref{identity-symp}) implies that
\begin{equation}
r_{\overline{\alpha}}(\rho(G_{11}))=\zeta_{p^n}^{t^2m},\quad
r_{\overline{\alpha}}(\rho(G_{1j}))=\zeta_{p^n}^{2a_{j-1}tm+tm_j},\quad (2\leq j\leq k).
\end{equation} 
So by varying $t,a_1,\dots,a_{k-1}$, we will obtain at least $ \frac{1}{2} (p-1)p^{(k-1)}$ different roots. 

\noindent
{\it  Case II:} Let $\rho(G_{11})=I$. This will force $\rho(G_{12})\neq I$.
Now, pick a root $r$ for $\mathcal{S}$ such that $r(\rho(G_{12}))=\zeta_{p^n}^{m}$, where $m\neq 0$ in $\Z{p^n}$. Let for this root $r(\rho(G_{2j}))=\zeta_{p^n}^{m_i}$ for $2\leq j\leq k$. Then by~\eqref{identity-symp} we have
$
r_{\overline{\alpha}}(\rho(G_{12}))=r(\rho(G_{11}^{2ta_1}))r(\rho(G_{12}^t))=r(\rho(G_{12}^t))=\zeta_{p^n}^{tm}
$.
Also
$$
r_{\overline{\alpha}}(\rho(G_{22}))=r(\rho(G_{11}^{a_1^2}))r(\rho(G_{12}^{a_1}))r(\rho(G_{22}))=r(\rho(G_{12}^{a_1}))r(\rho(G_{22}))=
\zeta_{p^n}^{a_1m}r(\rho(G_{22}))
.$$
Moreover for $3\leq j\leq k $
$$
r_{\overline{\alpha}}(\rho(G_{2j}))=r(\rho(G_{11}^{2a_1a_{j-1}}))r(\rho(G_{12}^{a_{j-1}}))r(\rho(G_{1j}^{a_1}))r(\rho(G_{2j}))=
\zeta_{p^n}^{a_{j-1}m}r(\rho(G_{1j}^{a_1}))r(\rho(G_{2j})).$$
Varying $t,a_1,\dots,a_{k-1}$ will now results in at least $(p-1)p^{k-1}$ different roots and this finishes the proof.
\end{proof}
We remark that, by Theorem~\ref{minimal-degree} and Proposition~\ref{factor}, Theorem~\ref{minimal degree profinite} also follows.

\section{Upper bound estimates for the product-free measure}\label{functional analysis}
This section gathers the functional analytic ingredients of the proof needed to cast Gowers' proof in the category of 
compact topological groups. Complete proof of these facts can be found, for instance, in~\cite{func}. After reviewing these facts, we will give the proof of Theorem~\ref{normbound} and Corollary~\ref{main corollary}.

\begin{definition}
Let $\mathcal{H}$ be a separable Hilbert space with an orthonormal basis $\{e_n\}$
and let $T\in B(\mathcal{H})$, where $B(\mathcal{H})$ denotes the space of bounded operators on $\mathcal{H}$. If the condition
\[
\sum_{n=1}^\infty \|T(e_n)\|^2<\infty,
\]
holds then $T$ is called a Hilbert-Schmidt operator.
\end{definition}
This is independent of the choice of the orthonormal basis of $\mathcal{H}$. Moreover,
for a Hilbert-Schmidt operator $T$, the value of the sum is also independent of the choice of the orthonormal basis:
\begin{equation*}
\|T\|^2_{HS}:=\sum_{n=1}^\infty \|T(e_n)\|^2.
\end{equation*}

\begin{lemma} Let $\mathcal{H}$ be a separable Hilbert space and let $T\in B(\mathcal{H})$ then
\begin{enumerate}
\item[a)] $T$ is Hilbert-Schmidt if and only if $T^*$ is Hilbert-Schmidt.
\item[b)] If either $S$ or $T$ is Hilbert-Schmidt, then $ST$ is Hilbert-Schmidt.
\item[c)] If $T$ is Hilbert-Schmidt then it is compact.
\end{enumerate}
\end{lemma}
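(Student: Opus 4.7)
For part (a), my plan is to establish the stronger identity $\|T\|_{HS} = \|T^*\|_{HS}$, which simultaneously proves that the value $\|T\|_{HS}^2 = \sum_n \|T(e_n)\|^2$ is independent of the orthonormal basis. Fix an orthonormal basis $\{e_n\}$ of $\mathcal{H}$. By Parseval applied to the vector $T(e_n)$,
\begin{equation*}
\|T(e_n)\|^2 = \sum_m |\langle T(e_n), e_m\rangle|^2 = \sum_m |\langle e_n, T^*(e_m)\rangle|^2.
\end{equation*}
Summing in $n$ and exchanging the (nonnegative) double sum via Tonelli gives $\sum_n \|T(e_n)\|^2 = \sum_m \|T^*(e_m)\|^2$. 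Applying the same identity with $\{e_n\}$ replaced by an arbitrary orthonormal basis shows both basis-independence and the implication in part (a).

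For part (b), if $T$ is Hilbert-Schmidt, then the bound $\|ST(e_n)\| \le \|S\|_{\mathrm{op}} \|T(e_n)\|$ together with $S$ being bounded yields
\begin{equation*}
\sum_n \|ST(e_n)\|^2 \le \|S\|_{\mathrm{op}}^2 \sum_n \|T(e_n)\|^2 < \infty.
\end{equation*}
If instead $S$ is Hilbert-Schmidt, then by part (a) so is $S^*$, hence by the case just proved $T^*S^*$ is Hilbert-Schmidt; applying part (a) once more, $ST = (T^*S^*)^*$ is Hilbert-Schmidt.

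For part (c), I would exhibit $T$ as a norm-limit of finite-rank operators. Let $P_N$ denote the orthogonal projection onto $\mathrm{span}\{e_1, \dots, e_N\}$. Then $TP_N$ has finite rank, and the operator norm is dominated by the Hilbert-Schmidt norm:
\begin{equation*}
\|T - TP_N\|_{\mathrm{op}}^2 \le \|T(I - P_N)\|_{HS}^2 = \sum_{n > N} \|T(e_n)\|^2 \xrightarrow{N \to \infty} 0,
\end{equation*}
since the series $\sum_n \|T(e_n)\|^2$ converges. Because the compact operators form a norm-closed subspace of $B(\mathcal{H})$ containing all finite-rank operators, $T$ is compact.

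I do not expect serious obstacles here: the only subtle point is justifying the interchange of summations in part (a), which is immediate from Tonelli since the summands are nonnegative, and the inequality $\|\cdot\|_{\mathrm{op}} \le \|\cdot\|_{HS}$ used in part (c), which follows from writing $\|Ax\|^2 = \sum_n |\langle Ax, e_n\rangle|^2 \le \|x\|^2 \sum_n \|A^* e_n\|^2$ and invoking part (a).
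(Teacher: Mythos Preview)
Your proof is correct and follows the standard textbook approach. The paper does not actually prove this lemma: it simply states the result and refers the reader to \cite{func} for a complete proof, so there is no paper argument to compare against; your argument is exactly the kind of proof one would find there.
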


Another useful fact is the singular value decomposition for the compact operators on a Hilbert space:

\begin{lemma}[singular value decomposition]
Let $\mathcal{H}$ denote a separable Hilbert space and $T\in B(\mathcal{H})$ a compact operator. Then there exists two orthonormal sets $\{e_n\}$ and $\{e'_n\}$ in $\mathcal{H}$ such that 
$
T(e_i)=\lambda_i e'_i$,  $T^*(e'_i)=\lambda_i e_i$, for $i=1,2,\dots$.
where 
$
\lambda_1\geq \lambda_2\geq\cdots\geq 0,
$
and for any $x\in\mathcal{H}$
\begin{equation}\label{no}
T(x)=\sum_{i\geq 1}\lambda_i\langle x,e_i\rangle e'_i.
\end{equation}
Moreover, by~(\ref{no}), we have $\|T\|_{op}=\lambda_1$.
\end{lemma}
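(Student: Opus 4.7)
The plan is to deduce the singular value decomposition from the spectral theorem for compact self-adjoint operators applied to $T^{*}T$. Since $T$ is compact, so is $T^{*}$ by part (a) of the previous lemma, and then $T^{*}T$ is compact by part (b). Moreover $T^{*}T$ is self-adjoint and positive, because $\langle T^{*}T x, x\rangle = \|Tx\|^{2}\geq 0$ for every $x\in\mathcal{H}$, so its spectrum is a subset of $[0,\infty)$ with $0$ as the only possible accumulation point.

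Invoking the spectral theorem for compact self-adjoint operators then produces an at most countable orthonormal system $\{e_{n}\}\subset\mathcal{H}$ of eigenvectors of $T^{*}T$ corresponding to strictly positive eigenvalues $\mu_{1}\geq\mu_{2}\geq\cdots$ (tending to $0$ in the infinite case), such that $\{e_{n}\}$ is an orthonormal basis of $(\ker T^{*}T)^{\perp}$. Since $\ker T^{*}T=\ker T$ (from $\|Tx\|^{2}=\langle T^{*}Tx,x\rangle$), I set $\lambda_{n}:=\sqrt{\mu_{n}}$ and define $e'_{n}:=\lambda_{n}^{-1}T(e_{n})$. A one-line calculation,
\[ \langle e'_{i}, e'_{j}\rangle \;=\; \lambda_{i}^{-1}\lambda_{j}^{-1}\langle T e_{i}, T e_{j}\rangle \;=\; \lambda_{i}^{-1}\lambda_{j}^{-1}\langle T^{*}T e_{i}, e_{j}\rangle \;=\; \delta_{ij}, \]
shows that $\{e'_{n}\}$ is orthonormal, and $T^{*}(e'_{n})=\lambda_{n}^{-1}T^{*}T(e_{n})=\lambda_{n}e_{n}$ gives the second intertwining identity; the first, $T(e_{n})=\lambda_{n}e'_{n}$, is built into the definition.

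For the series expansion, I decompose an arbitrary $x\in\mathcal{H}$ as $x = x_{0} + \sum_{n}\langle x, e_{n}\rangle e_{n}$ with $x_{0}\in\ker T=\ker T^{*}T$, and apply $T$ term-by-term, which is legitimate by continuity of $T$; this yields $T(x)=\sum_{n}\lambda_{n}\langle x,e_{n}\rangle e'_{n}$. The norm identity then falls out of Parseval: $\|Tx\|^{2}=\sum_{n}\lambda_{n}^{2}|\langle x,e_{n}\rangle|^{2}\leq\lambda_{1}^{2}\|x\|^{2}$ for all $x$, with equality at $x=e_{1}$, so $\|T\|_{\mathrm{op}}=\lambda_{1}$.

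The main obstacle is essentially packaged inside the spectral theorem for compact self-adjoint operators, which I am invoking as a black box; granted that, the rest is a bookkeeping exercise. The only mildly subtle points are that one must include only strictly positive eigenvalues so that $\lambda_{n}^{-1}$ makes sense in the definition of $e'_{n}$, and the degenerate cases ($T=0$, or $T$ of finite rank) should be handled by interpreting the sums as finite; neither poses any real difficulty.
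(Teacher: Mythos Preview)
Your proof is correct and follows the standard route: reduce to the spectral theorem for the compact self-adjoint positive operator $T^{*}T$, take square roots of the eigenvalues, and push the eigenvectors through $T$ to build the second orthonormal system. All the verifications you give (orthonormality of $\{e'_n\}$, the intertwining identities, the series expansion via the splitting $\mathcal{H}=\ker T\oplus(\ker T)^{\perp}$, and the operator-norm computation) are accurate.

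There is nothing to compare against in the paper itself: the lemma is stated there as a standard fact from functional analysis and is not proved, with the reader referred to the cited textbook. Your argument is exactly the textbook proof one would expect to find there.
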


Now, let $G$ be a compact, second countable, Hausdorff topological group with a normalized Haar measure $\mu$. As usual, set $L^2(G)=\left\{ h:G\rightarrow\CC: \| h\|_2^2 <\infty\right\}$, 
where $\| h\|_2^2:=\int_G |h|^2 d\mu.$

Moreover, let us define $L^2_0(G)$ to be the set of all functions in $L^2(G)$ which are orthogonal to the constant function $1$. 
For $f_1,f_2\in L^2(G)$, the convolution $f_1\ast f_2\in L^2(G)$ is defined by
\begin{equation*}
(f_1\ast f_2)(x):=\int_G f_1(xy^{-1})f_2(y) \, d\mu(y).
\end{equation*}

Now take a function $f_1\in L_0^2(G)$ and consider the following kernel
$K(x,y):=f_1(xy^{-1}).$
Since $f_1 \in L^2(G)$, we have $K(x,y)\in L^2(G\times G).$
So we can define the following integral operator
\begin{equation}\label{Hilbert Kernel}
\Phi_K: L^2(G)\longrightarrow L^2(G),\quad
h \longmapsto \Phi_K(h),
\end{equation}
It is clear that $\Phi_K(h)(x)=(f_1\ast h)(x)$. One can also easily see that the adjoint operator 
$ \Phi_K^\ast$ is given by  
\begin{equation*}
\Phi_K^*(h)(y)=\int_G \overline{K(x,y)}h(x)d\mu(x).
\end{equation*}

\begin{lemma}\label{Trace}The integral operator $\Phi_K: L^2(G)\rightarrow L^2(G)$ is a Hilbert-Schmidt operator and hence is compact. The norm of $\Phi_K$ is given by
$\|\Phi_K\|_{HS}=\|K\|_{L^2(G\times G)}.$
\end{lemma}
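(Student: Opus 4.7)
The plan is to verify the Hilbert--Schmidt property directly from the definition, by computing $\sum_n \|\Phi_K e_n\|_2^2$ for an arbitrary orthonormal basis $\{e_n\}$ of $L^2(G)$ and showing that this sum equals $\|K\|_{L^2(G\times G)}^2$. Since $K(x,y)=f_1(xy^{-1})$ and $f_1\in L^2(G)$, the translation/left-invariance of Haar measure gives $\|K\|_{L^2(G\times G)}^2=\|f_1\|_2^2<\infty$, so the desired finiteness and hence the Hilbert--Schmidt property will follow.

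The key computation proceeds as follows. For each fixed $x\in G$, the function $y\mapsto K(x,y)$ lies in $L^2(G)$, so I would expand it in the orthonormal basis $\{\overline{e_n}\}$ (which is again an orthonormal basis of $L^2(G)$ because complex conjugation is an isometric involution). This yields
\begin{equation*}
\Phi_K(e_n)(x)=\int_G K(x,y)e_n(y)\,d\mu(y)=\langle K(x,\cdot),\overline{e_n}\rangle,
\end{equation*}
and Parseval's identity applied to $K(x,\cdot)$ gives
\begin{equation*}
\sum_{n=1}^\infty |\Phi_K(e_n)(x)|^2=\|K(x,\cdot)\|_2^2.
\end{equation*}
Integrating this identity in $x$, using Tonelli's theorem to exchange summation and integration (the integrand is nonnegative), delivers
\begin{equation*}
\sum_{n=1}^\infty\|\Phi_K(e_n)\|_2^2=\int_G\|K(x,\cdot)\|_2^2\,d\mu(x)=\|K\|_{L^2(G\times G)}^2.
\end{equation*}
This simultaneously proves that $\Phi_K$ is Hilbert--Schmidt and gives the desired norm identity, and compactness then follows from the previous lemma.

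The only non-routine point is the justification that $\{\overline{e_n}\}$ is an orthonormal basis and that Parseval applies slice-wise to $K(x,\cdot)$; both are immediate for separable Hilbert spaces. The measurability needed to run Tonelli is standard since $K$ is a measurable function on $G\times G$ (indeed, it is the pullback of $f_1$ under the continuous map $(x,y)\mapsto xy^{-1}$), and the $L^2$ bound on $K$ guarantees that all quantities involved are finite. I do not anticipate any serious obstacle beyond bookkeeping.
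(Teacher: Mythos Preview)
Your argument is correct and is the standard textbook proof of this fact. The paper does not actually prove this lemma; it states it as one of the functional analytic facts whose ``complete proof \dots\ can be found, for instance, in~\cite{func}'', so there is no in-paper proof to compare against.
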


\begin{proof}[ Proof of Theorem~\ref{normbound}:]
To prove Theorem~\ref{normbound} note that by replacing $f_2$ with $f_2-\int_G f_2\, d\mu$ and noticing that $f_1\ast c=0$ for every constant function $c$, without loss of generality, we can assume that $f_2\in L_0^2(G)$.
Consider the operator $\Phi_K: L^2(G)\rightarrow L^2(G)$, 
defined by~\eqref{Hilbert Kernel}. Let $\Phi_0$ denote the restriction of $\Phi_K$ to $L_0^2(G)$. We need to show that  
\begin{equation}
\|\Phi_0\|_{op}^2\leq \frac{1}{m(G)}\|f_1\|_2^2.
\end{equation}
Apply the singular value decomposition to obtain orthonormal 
bases $\{e_n\}$ and $\{e'_n\}$ in $L_0^2(G)$ such that 
$
\Phi_0(e_i)=\lambda_i e'_i,
$ 
where 
$
\lambda_1\geq \lambda_2\geq\cdots\geq 0.
$
Moreover $\|\Phi_0\|_{op}=\lambda_1$. Let $V_1$ be the eigenspace of the self-adjoint operator $\Phi_0^*\Phi_0$ corresponding to $\lambda_1^2$. Since $\Phi_{0}^{\ast}\Phi_{0}$ is a compact
operator, $\dim V_{1} < \infty $. We have
\begin{align*}
\|\Phi_0\|_{op}^2\dim V_1=\lambda_1^2 \dim(V_1) & \leq \sum_{i=1}^\infty \lambda_i^2 
\le \|\Phi_K\|_{HS}^2  =\|K\|^2_{L^2(G\times G)}\\
&=\int_G\int_G |f_1(xy^{-1})|^2 d\mu(y)d\mu(x)=\|f_1\|_2^2.
\end{align*} 
We show that $\dim V_1\geq m(G)$, and this would finish the proof. We will construct a linear action of $G$
on $V_1$ by defining for every $h\in V_1$ and $g \in G$
$T_gh(x):=h(xg).$
We need to verify that,
 \begin{equation}\label{action}
T_g(\Phi_K^*\Phi_K(h))=\Phi_K^*\Phi_K(T_gh). 
\end{equation}
Since $G$ is compact and hence unimodular we have,
\begin{align*}
\Phi_K(T_g h)(x)&= \int_G f_1(xy^{-1})h(yg)d\mu(y)
=\int_G f_1(x(zg^{-1})^{-1})h(z) d\mu(z)\\ 
&=\int_G f_1(xgz^{-1})h(z) d\mu(z)
=T_g(\Phi_K(h))(x).
\end{align*}
By acting $\Phi_K^*$ from the left we obtain~(\ref{action}). Since $V_1$ is a subspace of $L_0^2(G)$, it does not contain the constant function, and hence this linear action is non-trivial. This induces a non-trivial representation of $G$ in the unitary group $U(V_1)$, thus $\dim V_1 \ge m(G)$. 
\end{proof}
\begin{proof}[ Proof of Corollary~\ref{main corollary}.]
Apply the inequality to $f_1=1_A$ and $f_2=1_B-\mu(B)$.
\end{proof}
\begin{proof}[ Proof of Theorem~\ref{main-int}:]
Let $ S :=\{ y \in G: (1_A \ast 1_B)(y)=0 \}$.
Thus
\begin{align*}
\mu(S)^{1/2}\mu(A)\mu(B)&=
\pra{
\int_S \left|(1_A\ast 1_B)(y)-\mu(A)\mu(B)\right|^2 d\mu(y)
}^{1/2}\\
&\leq \pra{
\int_G \left|(1_A\ast 1_B)(y)-\mu(A)\mu(B)\right|^2 d\mu(y)
}^{1/2}\\
&=\|1_A\ast 1_B-\mu(A)\mu(B)\|_2.
\end{align*}
From Corollary~\ref{main corollary} we can deduce that 
$$
\mu(S)^{1/2}\mu(A)\mu(B)\leq \sqrt{\frac{\mu(A)\mu(B)}{m(G)}},
$$
therefore 
$
\mu(S)\leq 1/(m(G)\mu(A)\mu(B))
$.
This implies that $\mu(C\setminus S)>0 $, since otherwise we get 
$
\mu(C)\mu(A)\mu(B)\leq 1/m(G)
$,
which is a contradiction. Hence there exists a set of positive measure of $y\in C$ so that $1_A\ast 1_B(y)\neq 0$, which means that $AB\cap C$ has positive measure.

For the second statement, let us define
$\Sigma:=\{(a,b,c)\in A\times B\times C: ab=c\}.$
Notice that
\begin{equation}
\mu(\Sigma)=\inn{1_A\ast 1_B}{1_C}=\inn{1_A\ast (1_B-\mu(B))}{1_C}+\mu(A)\mu(B)\mu(C).
\end{equation}
By Cauchy-Schwartz inequality we have
\begin{align*}
\inn{1_A\ast (1_B-\mu(B))}{1_C}^2&\leq \|1_A\ast (1_B-\mu(B))\|_2^2\|1_C\|_2^2\\
&=\|1_A\ast 1_B-\mu(A)\mu(B)\|_2^2\mu(C)
\leq \frac{\mu(A)\mu(B)\mu(C)}{m(G)}.
\end{align*}
If 
\[
\frac{\mu(A)\mu(B)\mu(C)}{m(G)}\leq \eta^2 \mu(A)^2\mu(B)^2\mu(C)^2,
\]
which is fulfilled by our assumption, we can deduce that 
$$
|\inn{1_A\ast (1_B-\mu(B))}{1_C}|\leq \eta \mu(A)\mu(B)\mu(C),
$$ 
and hence,
$
\mu(\Sigma)\geq \mu(A)\mu(B)\mu(C)-\eta \mu(A)\mu(B)\mu(C)=(1-\eta) \mu(A)\mu(B)\mu(C)
$.
\end{proof}
\begin{remark}
One can also establish a lower bound for $\mu(AB)$. For $f_1=1_A$ and $f_2=1_B-\mu(B)$, one has
$
\|f_2\|_2^2=\mu(B)(1-\mu(B)). 
$ 
Thus by Theorem~\ref{normbound} we have
$$
\mu(G-AB)^{1/2}\mu(A)\mu(B)\leq \sqrt{1/m(G)}\mu(A)^{1/2}\left(\mu(B)(1-\mu(B))\right)^{1/2},
$$
therefore 
$$
1-\frac{1-\mu(B)}{m(G)\mu(A)\mu(B)}\leq \mu(AB).
$$
\end{remark}

\section{Lower bounds for the product-free measure}\label{lower bound}
We will now turn to establishing the lower bounds for the product-free measure. 
Consider the reduction map $\phi: \SL_k(\ZZ_p)\rightarrow\SL_k(\Z{p}),$
and let $Q$ be the subgroup consisting of all matrices $g \in \SL_k(\Z{p})$ such that 
$g_{1k}=\cdots =g_{k-1,k}=0.$
It is clear that $Q$ is the stabilizer of a point in the action of $\SL_k(\ZZ_p)$ on the 
projective space and hence
$[\SL_k(\Z{p}):Q]=\frac{p^{k}-1}{p-1}$.
 Applying Lemma~\ref{smallindex} establishes the lower bound. 

Now, let us take the symplectic group. It is clear that the reduction map
$\phi: \Sp_{2k}(\ZZ_p)\rightarrow\Sp_{2k}(\Z{p}),$
is surjective.
Consider the natural action of $\Sp_{2k}(\Z{p})$ on $(\Z{p})^{2k}$. We know that for 
any field $F$ and $l \ge 1$, the action of the symplectic group $\Sp_{2k}(F)$ on the set of
$l$-dimensional isotropic subspaces is transitive. Since every one-dimensional subspace is
isotropic, we obtain a transitive action on this space, which is equivalent to the  
action on the projective space. By choosing $H$ to be the stabilizer
of one of a point in the projective space, and taking the inverse image under the reduction 
map, we obtain a subgroup $Q$ of index
$[ \Sp_{2k}(\ZZ_p):Q]=\frac{p^{2k}-1}{p-1}$,
which again establishes the lower bound for the symplectic group.

\section{Tree automorphism groups}\label{Automorphisms of the regular tree}
The goal of this section is to obtain lower and upper bounds on the product-free measure of the group of automorphisms of a rooted tree. Let $T=T_{k+1}$ be a regular tree of degree $k+1$. An automorphism of $T$ is defined to be a permutation of the vertices of $T$ that 
preserves incidence. The set of all automorphisms of $T$ together with the topology of point-wise convergence forms a locally compact group $G$ that acts on the set of vertices of $T$ transitively. Fix one of the vertices of $T$ as root and denote it by $O$. Now, consider the stabilizer $A_{k+1}$ of this vertex in $G$. It is easy to see that this subgroup is a profinite and hence compact group. 
To see this, note that since every $x \in A_{k+1}$ fixes $O$, it must permute the set of 
all the $k+1$ neighbouring vertices. A simple induction shows that for every $j \ge 1$ all $(k+1)k^{j-1}$ vertices of distance $j$ from $O$ must also be permuted by $x$. These  induce a family of homomorphism 
$\sigma_j:A_{k+1} \rightarrow \Sigma_{(k+1)k^{j-1}},$
where $\Sigma_m$ denotes the symmetric group on $\{1,2, \dots , m\}$. Then the following family of finite index subgroups can be used to provide a system of fundamental open neighbourhood of the identity automorphism
$ \mathscr{C}_j=\{ x \in A_{k+1}: \sigma_j(x)= { \mathrm{id}} \}. $
And $A_{k+1}$ is the inverse limit,
$A_{k+1}=\underleftarrow {\lim }A_{k+1}/\mathscr{C}_j.$
For more details we reefer the reader to~\cite{lub}.
We will now set out to define the subgroup of $A_{k+1}$ that appears in the statement of Theorem~\ref{tree}.

\begin{definition}
An automorphism $x \in A_{k+1}$ is called positive if $\sigma_j(x)$ is an even permutation for all $j \ge 1$. We will denote the group of all positive automorphisms by $A^+_{k+1}$.
\end{definition}

First, notice that $A^+_{k+1}$ is a closed subgroup of $A_{k+1}$ and hence a profinite group. In fact, the group can also be represented by 
\begin{equation}\label{A_k}
A_{k+1}^+=\underleftarrow {\lim }A_{k+1}^+/\mathscr{C}_j^+,
\end{equation}
where 
$
\mathscr{C}_j^+:=\left\{x \in A_{k+1}^+: \sigma_j(x)=\mathrm{id}\right\}.
$
In what follows, let $\Alt_{k+1}\leq\Sigma_{k+1}$ denote the alternating group on $k+1$ symbols. We will need the following fact from the representation theory of finite groups:

\begin{theorem}[See~\cite{fulton} Exercise 5.5]\label{alt}
For $k \ge 6$, the minimum dimension of non-trivial representations of $\Alt_k$ is $k-1$.
\end{theorem}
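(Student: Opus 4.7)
The plan is to establish both an upper bound $m(\Alt_k) \le k - 1$ and a matching lower bound $m(\Alt_k) \ge k - 1$ for $k \ge 6$.

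For the upper bound I would exhibit an explicit irreducible representation of dimension $k - 1$. Take the natural permutation representation of $\Sigma_k$ on $\CC^k$; its invariant hyperplane $W = \{(x_1, \ldots, x_k) : x_1 + \cdots + x_k = 0\}$ carries the standard $(k-1)$-dimensional irreducible representation of $\Sigma_k$. Its restriction to $\Alt_k$ remains irreducible for $k \ge 5$: by Clifford theory for the index-two subgroup $\Alt_k \trianglelefteq \Sigma_k$, an irreducible $\Sigma_k$-module restricts to an irreducible $\Alt_k$-module precisely when it is not isomorphic to its tensor product with the sign character, equivalently when the corresponding Young diagram differs from its transpose. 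For the standard representation the diagram $(k-1, 1)$ has transpose $(2, 1^{k-2})$, and these differ whenever $k \ge 4$.

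For the lower bound I would argue by induction on $k$. The base case $k = 6$ I would verify by direct inspection of the character table of $\Alt_6$, whose irreducible character degrees are $1, 5, 5, 8, 8, 9, 10$, so that the minimum non-trivial degree is indeed $5 = 6 - 1$. For the inductive step let $\rho : \Alt_k \to \GL_d(\CC)$ be a non-trivial irreducible representation with $d \le k - 2$. Because $\Alt_k$ is simple for $k \ge 5$, the representation $\rho$ is automatically faithful. Embed $H = \Alt_{k-1}$ as the stabilizer of a chosen symbol and consider $\rho|_H$. By the inductive hypothesis every non-trivial irreducible constituent of $\rho|_H$ has dimension at least $k - 2$, and since $d \le k - 2 < 2(k - 2)$ for $k \ge 5$, at most one non-trivial constituent can appear. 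Hence $\rho|_H$ is either a sum of copies of the trivial representation, or is itself irreducible of dimension exactly $k - 2$, in which case it is isomorphic to the standard representation of $\Alt_{k-1}$.

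In the first case $H \le \ker \rho$; since $\Alt_k$ is simple and $\Alt_{k-1}$ is non-trivial, the normal closure of $H$ in $\Alt_k$ is all of $\Alt_k$, forcing $\ker \rho = \Alt_k$, a contradiction. The main obstacle is the second case, where $d = k - 2$ and $\rho|_H$ is the standard representation of $\Alt_{k-1}$; to rule this out I would repeat the analysis with a second point-stabilizer $H'$, match the resulting standard-representation structures on the overlap $H \cap H' \cong \Alt_{k-2}$, and combine this with the conjugation relating $H$ and $H'$ to show that $\rho$ would descend to a $(k-2)$-dimensional representation of the full symmetric group $\Sigma_k$. The latter contradicts the classical fact, provable via the hook length formula for Specht modules, that every non-trivial irreducible representation of $\Sigma_k$ has dimension at least $k - 1$ for $k \ge 5$, completing the induction.
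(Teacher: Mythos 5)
The paper does not prove this statement at all; it simply cites it as Exercise~5.5 of Fulton--Harris, so there is no in-paper argument to compare yours against. Your proposal must therefore stand on its own, and as written it has a genuine gap in the hardest part.

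The upper bound via the standard representation and the Clifford-theory criterion for irreducibility under restriction to $\Alt_k$ is correct, as is the base case $k=6$. In the inductive step, Case~1 ($\rho|_H$ a sum of trivials) is fine. The problem is Case~2. First, a minor over-claim: you assert the unique non-trivial constituent ``is isomorphic to the standard representation of $\Alt_{k-1}$,'' but the inductive hypothesis only gives a dimension bound, and in fact for $k=7$ the subgroup $\Alt_6$ has \emph{two} inequivalent $5$-dimensional irreducibles, so this identification is not automatic. Second, and more seriously, the decisive step --- that by comparing the $\Alt_{k-1}$- and $\Alt_{k-2}$-structures you can ``show that $\rho$ would descend to a $(k-2)$-dimensional representation of the full symmetric group $\Sigma_k$'' --- is asserted, not argued, and it is not clear it can be made to work. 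An irreducible representation of $\Alt_k$ extends to $\Sigma_k$ if and only if it is fixed by conjugation by an odd permutation (equivalently, if the corresponding partition is not self-conjugate); when it is not fixed, induction to $\Sigma_k$ produces a single irreducible of twice the dimension, and there is no $(k-2)$-dimensional $\Sigma_k$-representation to contradict. Nothing in the ``matching on $H\cap H'\cong\Alt_{k-2}$'' sketch forces the extendability you need, so the case in which $\rho|_H$ is irreducible of dimension $k-2$ is not actually ruled out. This is precisely the delicate case that makes the theorem non-trivial, and it needs a real argument --- for instance via the branching rule and hook-length estimates for Specht modules (a restriction that is irreducible forces a rectangular Young diagram, whose dimension one then bounds below), or via an eigenvalue-counting argument on a large abelian subgroup generated by disjoint double transpositions. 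As it stands, the proposal establishes the upper bound but leaves the lower bound incomplete.
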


\begin{proof}[ Proof of Theorem~\ref{tree}:]
For the lower bound, note that $\sigma_1:A^+_{k+1} \rightarrow \Alt_{k+1}$ is surjective. Let $H$ be the subgroup of $\Alt_{k+1}$ consisting of those permutations that fix $k+1$. $H$ is clearly isomorphic to $\Alt_k$. Now, 
apply Lemma~\ref{smallindex} to the subgroup $\sigma_1^{-1}(H)$ to obtain an open subgroup of index $k+1$ in $A^+_{k+1}$. This establishes the lower bound.

For the upper bound, we need to show that $A^+_{k+1}$ does not have any non-trivial continuous representation of dimension less than $k-1$.

By~(\ref{A_k}) then we should prove that $F_j:=A_{k+1}^+/\mathscr{C}_j^+$ does not have any non-trivial representation of dimension less than $k-1$. Suppose $\rho$ be such a non-trivial representation.

For $j=1$, we will get $F_1=\Alt_{k+1}$, and then by Theorem~\ref{alt}, for $k\geq 5$, all the non-trivial representation has dimension bigger than $k$. For the sake of clarity and notational simplicity, we will present the argument for $j=2$. The argument readily extends to an arbitrary $j \ge 2$. It is easy to see that 
\[ F_2 \simeq \Alt_{k+1} \ltimes \underbrace {\left( {\Sigma _k  \times \Sigma _k  \times  \cdots  \times \Sigma _k } \right)^ +  }_{k + 1},\]
where   
$$\underbrace {\left( {\Sigma _k  \times \Sigma _k  \times  \cdots  \times \Sigma _k } \right)^ +  }_{k + 1}:=\left\{(\sigma_1, \dots, \sigma_{k+1})\in \underbrace {\left( {\Sigma _k  \times \Sigma _k  \times  \cdots  \times \Sigma _k } \right)}_{k + 1}:\prod_{i=1}^{k+1} \sgn(\sigma_i)=1\right\}.$$
and $\Alt_{k+1}$ acts by permuting the factors.

If the restriction of $\rho$ to $\Alt_{k+1}$ is non-trivial then we are done by Theorem~\ref{alt}. Suppose that the restriction of $\rho$ to $\Alt_{k+1}$ is trivial. Clearly 
\[ \underbrace{\Alt_k \times \cdots\times\Alt_k}_{k+1}\unlhd \underbrace{(\Sigma_k \times \cdots \times \Sigma_k)^+}_{k+1},\]
Again, we can assume that the restriction of $\rho$ to each one of the factors is trivial, since otherwise we can
apply Theorem~\ref{alt} to obtain the bound $k-1$. So $\rho$ factors through the quotient 
$$(\Sigma_k \times \cdots \times \Sigma_k)^+/(\Alt_k \times \cdots\times \Alt_k).$$
Note that since the restriction of $\rho$ to $\Alt_{k+1}$ is trivial we have
\[ \rho(\sigma_1,\sigma_2, \dots, \sigma_k,\sigma_{k+1})=\rho(\sigma_{i_1}, \sigma_{i_2}, \dots, \sigma_{i_k},\sigma_{i_{k+1}}), \]
for any even permutation $(i_1, i_2, \dots, i_k, i_{k+1})$ of the set $\{ 1, \dots ,k,k+1 \}$.
Notice that 
\begin{equation}\label{L}
\frac{\overbrace{\left(\Sigma_k\times\Sigma_k\times\cdots\times\Sigma_k\right)^+}^{k+1}}{\underbrace{\Alt_k\times\Alt_k\times\cdots\times\Alt_k}_{k+1}}\cong \mathscr{L}:=\left\{ (v_1, \dots, v_{k+1}) \in \mathbb{F}_2^{k+1}: v_1+\cdots +v_{k+1}=0 \right\}.
\end{equation}
So, $\rho$ will be trivial if we can show:

\begin{lemma}\label{}
Let $k \ge 6$ be an integers and $\mathscr{L}$ be the group defined in~(\ref{L}). Let 
$\rho: \mathscr{L} \rightarrow \GL_d(\CC)$,
be a non-trivial representation of $\mathscr{L}$ 
such that $\rho(v_1, \dots, v_{k+1})=\rho(v_{i_1}, \dots , v_{i_{k+1}})$ for any even permutation $(i_1, \dots , i_{k+1})$ of the
set $\{ 1, \dots , k+1 \}$. Then $d \ge k-1$.
\end{lemma}

\begin{proof}
We will show that $\rho$ is faithful when $k+1$ is odd and $|\ker(\rho)|\leq 2$ when $k+1$ is even. For $0\neq v\in \mathscr{L}$, define 
$I(v):=\{ 1 \le i \le k+1: v_i =1 \}$.
Let assume that $\rho$ is not a faithful representation. If for some $0\neq v\in \ker(\rho)$, we have $|I(v)|=2$, then $\ker(\rho)$ will contain every $w$ with $|I(w)|=2$, since for any $w\in \mathscr{L}$, with $|I(w)|=2$, we have $\sigma(v)=w$, for some $\sigma\in \Alt_{k+1}$. This implies that $\ker(\rho)=\mathscr{L}$, hence $\rho$ is trivial representation. Suppose $0 \neq v \in \ker(\rho)$ is chosen such that $|I(v)|$ is minimal. Since $\rho$ is non-trivial then $|I(v)|=2j>2$. Without loss of generality assume that $v=(1,1, \dots , 1, 0, \dots ,0)$ where the first $2j$ entries are equal to $1$ and the rest are zero.

If $k+1$ is odd then we can consider the $3$-cycle $\sigma=(1,2,2j+1) \in \Alt_{k+1}$. Now it is easy to see that $\sigma \cdot v-v$ has $1$ in only two positions, hence $\sigma(v)-v\in\ker(\rho)$, with $|I(\sigma(v)-v)|=2$. This shows that $\rho$ is a faithful representation when $k+1$ is odd.  

A similar argument can be made when $k+1$ is even and $|\ker(\rho)|>2$. This show that $\rho$ is faithful when $k+1$ is odd and $|\ker(\rho)|\leq 2$ when $k+1$ is even. In either case $\rho(\mathscr{L})$ has a subgroup isomorphic to $\mathbb{F}_2^{k-1}$. The set $\rho(\mathscr{L})$ can be simultaneously
diagonalized with diagonal entries being $\pm 1$. Now it is clear that $d \ge k-1$ in both cases.
\end{proof}

For $j \ge 3$, the group $F_j$ is isomorphic to an iterated semi-direct product of alternating groups as above and a similar argument establishes the lower bound on the degree of nontrivial representation. Applying Theorem~\ref{main-int} completes the proof. 
\end{proof}
\section{Product-free measure of abelian groups}\label{Product-free measure}
We will compute the exact value $\p(G)$ for connected abelian Lie groups $G$. Let $\TT^k$ denote
the $k$-dimensional torus. Then,
\begin{theorem}
For any $k \ge 1$ we have $\p(\TT^k)=1/3$.
\end{theorem}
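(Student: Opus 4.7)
The plan is to prove matching bounds. For the lower bound, I would exhibit an explicit product-free set: take
\[ A = \{(x_1,\ldots,x_k) \in \TT^k : x_1 \in (1/3,2/3)\}, \]
which has measure $1/3$. If $(x,y,z) \in A^3$ satisfied $x+y=z$, then the first coordinates would give a solution to $a+b=c$ with $a,b,c \in (1/3,2/3) \subset \TT$, which is impossible (the open interval $(1/3,2/3)$ is a sum-free subset of $\TT$, since its sum set lies in $(2/3,1) \cup [0,1/3)$ modulo $1$). This shows $\p(\TT^k) \ge 1/3$.

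For the upper bound, the natural tool is the Kneser--Macbeath inequality for connected compact abelian groups: if $G$ is such a group with normalized Haar measure $\mu$ and $A,B \subseteq G$ are measurable, then
\[ \mu^*(A+B) \ge \min\bigl(\mu(A)+\mu(B),\,1\bigr). \]
(Macbeath established this for $\TT^k$ by a Brunn--Minkowski type argument; see for instance Macbeath's original paper, or Kneser's generalization to arbitrary connected compact abelian groups.) Assuming this, let $A \subseteq \TT^k$ be a measurable product-free set, so $A \cap (A+A) = \emptyset$ (after discarding a null set, as justified by the proposition comparing $\p$ and $\p_0$ in Section~\ref{Prel}). Then $\mu(A) + \mu(A+A) \le 1$. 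Applying the Kneser--Macbeath bound with $B=A$:
\[ \mu(A) + \min(2\mu(A),1) \le 1. \]
The case $2\mu(A) \ge 1$ is immediately absurd, so $2\mu(A) < 1$ and hence $3\mu(A) \le 1$, giving $\mu(A) \le 1/3$. Taking the supremum over product-free sets yields $\p(\TT^k) \le 1/3$.

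The main conceptual point is that, in stark contrast to the discrete/profinite setting of Theorem~\ref{abel}, the connectedness of $\TT^k$ rules out the arithmetic obstructions that shift the optimal density away from $1/3$ for small residues. This is precisely what the Kneser--Macbeath inequality encodes: in a connected group there are no proper open subgroups to absorb sumsets, so doubling a set forces its measure to grow by the full additive amount up to saturation. The only real obstacle to the proof is citing the Kneser--Macbeath inequality in the correct generality; once it is in hand, the rest is a two-line calculation, and the lower-bound construction is immediate.
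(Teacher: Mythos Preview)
Your proof is correct, but it takes a genuinely different route from the paper's. For the lower bound, both you and the paper use (implicitly, in the paper's case) the pullback of a sum-free interval in one coordinate. For the upper bound, however, the paper does \emph{not} invoke Kneser--Macbeath. Instead it argues by discretization: given a measurable product-free $A$ with $\mu(A)>1/3$, it first passes to a compact $K\subseteq A$, then thickens $K$ to an open product-free $U$ (using the bi-invariant metric and the positive distance $d(K,K^2)>0$), approximates $U$ by a finite union of boxes, intersects with the finite $p$-torsion subgroup $G_p\cong(\Z{p})^k$, and finally applies the Green--Ruzsa theorem to $G_p$ to get a contradiction as $p\to\infty$. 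Your argument is shorter and more conceptual, trading the approximation machinery for a single classical inequality; the paper's argument avoids citing Macbeath/Kneser and instead reuses Green--Ruzsa, which is already needed for Theorem~\ref{abel}, keeping the whole section self-contained modulo that one reference and fitting the finite-approximation philosophy used throughout. One small note: the Macbeath inequality is usually stated for \emph{inner} measure $\mu_*(A+B)$; since $\mu_*\le\mu^*$ and $A+A\subseteq A^c$ forces $\mu^*(A+A)\le 1-\mu(A)$, your chain of inequalities goes through either way, but it would be cleaner to cite the inner-measure form.
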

\begin{proof}
The proof is similar to the proof given in~\cite{Ked1} where only open sets $A$ are considered. We will show that in fact there is no need to restrict 
to consider just the open sets. We will write this part of the proof, which is valid for any compact group, using the multiplicative notation. Suppose that $A$ is a product-free subset with $\mu(A)=1/3+\beta$ for some 
$\beta >0$. First choose a compact set $K \of A$ with $\mu(K) \ge 1/3+ \beta/2$. Clearly $K$ is product-free and since $K$ is compact $d(K,K^2)=\epsilon>0$ where we use $d$ as as shorthand for $d_{\TT^k}$. Let $U$ be the $\delta$-neighborhood of $K$, i.e., the set of points $u \in \TT^k$ such that $d(u,k)< \delta$ for some $k \in K$. We will show that for $\delta$ small enough $U$ will be product-free as well. Let $u_1,u_2,u_3 \in U$. So there exist $k_1,k_2,k_3 \in K$ such that
$d(u_i,k_i)<\delta$ for $i=1,2,3$. Using the invariance of $d$ we have
\begin{equation*}
\begin{split}
d(u_2u_3,k_2k_3) &\le d(u_2u_3,k_2u_3)+d(k_2u_3,k_2k_3) \\
                 &= d(u_2,k_2)+d(u_3,k_3)< 2 \delta.
\end{split}
\end{equation*}
From here we have $d(u_1,u_2u_3) \ge d(k_1,k_2k_3)-d(k_1,u_1)-d(k_2k_3,u_2u_3) \ge \epsilon-3 \delta$.
So if we choose $\delta=\epsilon/4$ we will have $d(u_1,u_2u_3)> \epsilon/4$ which shows that $U \cap U^2 =\emptyset$.

Now let us assume that $A$ is an open product-free subset of $\TT^k=\TT^{1} \times \cdots \times \TT^{1}$ with $\mu(A)=1/3+ \beta$. Again, by possibly exchanging $\beta$ with $\beta/2$ we can assume that $A$ is a disjoint union of 
finitely many boxes of the form: $I_1 \times I_2 \cdots \times I_k$ where $I_j$ is an interval in the $j$-th copy of $\TT^1$. 
Choose a large prime number $p$. Set $\zeta=\exp(2\pi i/p)$ and let $G_p$ be the elementary abelian $p$-group in 
$\TT^k$ consisting of all elements of order $p$. Note that $G_p$ contains $p^k$ elements. Consider a box 
$I_1\times I_2 \cdots \times I_k$ and let $h_j$ be the length of $I_j$. It is easy to see that 
\[ |G_p \cap I| \ge (ph_1-1)\cdots (ph_k-1)=p^k \mu(I)+ O(p^{k-1}). \]   
By adding up over all boxes we will get $|G_p \cap A| \ge p^k \mu(A)+O(p^{k-1})$.
Since $G_p$ is a finite $p$-group, by Green-Ruzsa theorem (see Theorem~\ref{GR}) we have
$ \p(G_p)\le 1/3+1/(3p)$. Since $A$ is product-free we must have
$(1/3+\beta/2)+O(1/p) \le 1/3+1/(3p)$, 
 which as $p \to \infty$ gives a contradiction.
\end{proof}
For finite abelian groups, the exact value of $\p(G)$ is explicitly given by:
\begin{theorem} (Green-Ruzsa, cf.~\cite{GR})\label{GR} Suppose $G$ is a finite abelian group of size $n$. 
\begin{enumerate}
\item If $n$ is divisible by a prime $p \equiv 2 \pmod{3}$, then $\p(G)=1/3 +1/(3p)$ where $p$ is the smallest 
such $p$.
\item Otherwise, if $3|n$, then $\p(G)=1/3$.
\item Otherwise, $\p(G)=1/3-1/(3m)$ where $m$ is the largest order of any element of $G$.
\end{enumerate}
\end{theorem}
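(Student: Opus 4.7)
The plan is to prove the result in two halves: constructive lower bounds and Fourier-analytic upper bounds. For the lower bound in case (1), I would pick the smallest prime $p\equiv 2\pmod 3$ dividing $n$, choose any surjection $\pi:G\to\ZZ/p\ZZ$, and set $A=\pi^{-1}(I)$ for $I=\{(p+1)/3,(p+1)/3+1,\dots,(2p-1)/3\}$; a direct modular check shows that $I+I\pmod p$ is disjoint from $I$, so $A$ is sum-free of density $(p+1)/(3p)=1/3+1/(3p)$. In case (2) I would surject onto $\ZZ/3\ZZ$ and pull back $\{1\}$, giving density $1/3$. In case (3) the hypotheses force every prime factor of $n$ to be $\equiv 1\pmod 3$, hence $m\equiv 1\pmod 3$; writing $m=3l+1$, since $m$ is the exponent of $G$, the structure theorem yields a splitting $G\cong\ZZ/m\ZZ\oplus G'$, and pulling back the middle interval $\{l+1,\dots,2l\}$ in $\ZZ/m\ZZ$ gives a sum-free set of density $(m-1)/(3m)=1/3-1/(3m)$.

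For the upper bound, write $\alpha=|A|/|G|$. The sum-free condition $A\cap(A+A)=\emptyset$ forces the triple count $T=\sum_{z}1_A(z)(1_A\ast 1_A)(z)$ to vanish. Expanding $T$ on the Pontryagin dual $\widehat{G}$ and isolating the trivial character gives
\[
\frac{|A|^3}{|G|}\;=\;-\frac{1}{|G|}\sum_{\chi\neq\chi_0}\widehat{1_A}(\chi)\,\overline{\widehat{1_A}(\chi)}^{\,2}.
\]
The triangle inequality together with Parseval, $\sum_\chi|\widehat{1_A}(\chi)|^2=|G||A|$, then produces a non-trivial character $\chi$ with $|\widehat{1_A}(\chi)|\ge \alpha^2|G|$. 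Since such a $\chi$ factors through a cyclic quotient $\ZZ/r\ZZ$ of $G$, I would push $1_A$ forward to $\ZZ/r\ZZ$ and exploit the large Fourier mass there to force $A$ to concentrate on a coset structure adapted to $\ker\chi$. Combined with a bookkeeping of which orders $r$ can occur (controlled by the arithmetic condition on $n$), this reduces the problem to bounding the extremal sum-free densities in the cyclic groups $\ZZ/r\ZZ$, where the sharp bounds can be checked by direct combinatorial arguments mirroring the lower-bound constructions.

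The main obstacle will be this last reduction: turning the spectral concentration $|\widehat{1_A}(\chi)|\ge\alpha^2|G|$ into a structural statement tight enough to match the claimed densities. In cases (1) and (2) the target is $\ge 1/3$ and a soft Fourier argument suffices. In case (3), however, the target density $1/3-1/(3m)$ lies strictly \emph{below} $1/3$, and extracting the $-1/(3m)$ correction requires a rigidity statement: near-extremal configurations must already be, up to a null set, translates of preimages of middle intervals in the cyclic summand of order exactly $m$. This is the technical heart of the Green--Ruzsa argument, and is handled by iterating the Fourier extraction step and analyzing sum-free sets in cyclic groups $\ZZ/r\ZZ$ by hand, using the divisibility constraints that the hypothesis on $n$ imposes on the orders of non-trivial characters.
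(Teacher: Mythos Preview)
The paper does not prove this theorem at all: it is quoted verbatim from Green and Ruzsa \cite{GR} and used as a black box in the proof of Theorem~\ref{abel}. So there is no ``paper's own proof'' to compare against.

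That said, your sketch is broadly aligned with the actual Green--Ruzsa argument. The lower-bound constructions are exactly right. For the upper bound, your Fourier extraction step is the correct starting point, and your own diagnosis of the difficulty is accurate: cases (1) and (2) are relatively soft, but case (3) --- getting the sharp $1/3 - 1/(3m)$ with $m$ the exponent --- is the genuinely hard part and requires the full machinery of the Green--Ruzsa paper (an iterated Fourier/structural argument plus a careful case analysis in cyclic groups of prime-power order). What you have written for that case is a plan, not a proof; the phrase ``turning the spectral concentration \dots\ into a structural statement tight enough'' is precisely where the real work lies, and it occupies most of \cite{GR}. If your intent was only to indicate the shape of the argument and defer to the reference, that is appropriate here, since that is exactly what the paper itself does.
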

Using Theorem~\ref{GR} we will prove our first theorem. 
\begin{proof}[ Proof of Theorem~\ref{abel}:]
First we will give the proof for $\ZZ_p$. Let $\phi_n: \ZZ_p \rightarrow \Z{p^n}$ be reduction modulo $p^n$ for $n \ge 1$. For
$p \equiv 2 \pmod{3}$, it is easy to verify that if $S \of \Z{p}$ is a product-free set of density $1/3+1/(3p)$, 
provided by Green-Ruzsa theorem, then $\phi_1^{-1}(S) \of \ZZ_p$ will be a set
of the same density. For $p \equiv 1 \pmod{3}$, consider the subset of $\Z{p^n}$:
\[ S_n = \left\{ \left\lfloor \frac{p^n+1}{3} \right\rfloor  , \dots,  2 \left\lfloor \frac{p^n+1}{3} \right\rfloor -1 \right\}.  \]
By Lemma~\ref{smallindex} we have
\[ \p(\ZZ_p) \ge \sup_{n \ge 1} \frac{|S_n|}{p^n}= \sup_{n \ge 1} \frac{ \left\lfloor \frac{p^n+1}{3} \right\rfloor -1 }{p^n} =\frac{1}{3}. \]
On the other hand, suppose $A$ is a measurable product-free subset of $\ZZ_p$ with $\mu(A)$ larger than the function given on the right side of~(\ref{formula}), that we denote it by $f(p)$. Choose a compact subset $A_1 \of A$ such that $\mu(A_1)=f(p)(1+\epsilon)$ for some $\epsilon >0$. By~(\ref{haar}), this can be seen in a sufficiently finite quotient of $\ZZ_p$, i.e., for sufficiently large $n$, the set $\phi_n(A_1) \of \Z{p^n}$ has a density larger that $f(p)(1+\epsilon/2)$. By the theorem of Green and Ruzsa, this implies that 
there exist $x_n,y_n,z_n \in A_1$ such that $\phi_n(x_n+y_n-z_n)=0$. Since $A_1$ is compact, after passing to a subsequence, there exist $x,y,z \in A_1$ such that $x_n \to x, y_n \to y, z_n \to z$. Now, since $x_n+y_n-z_n \to 0$, we have $x+y=z$, which is a contradiction. 

The proof for $\FF_p[[t]]$ is similar. The only difference is that all of the finite quotients of $\FF_p[[t]]$ 
are elementary $p$-groups. Hence when $p \equiv 1 \pmod{3}$, it is the third condition in Green-Ruza theorem that 
applies. 
\end{proof}
\bigskip
\noindent\textit{Acknowledgments.}
We have benefited from some notes on Terence Tao's weblog as well as Emmanuel Breuillard's lecture notes on ``Th\'{e}orie des groupes approximatifs". We wish to thank them for providing these notes online. For many fruitful discussions, we wish to thank Andrew Granville. The first author was supported in part by Facult\'{e} des \'{E}tudes
Sup\'{e}rieures et Postdoctorales de l'Universit\'{e} de Montr\'{e}al. The second author would like to 
thank CRM in Montreal for the visit during which part of this joint work was done.
We would also like to thank the referee for some useful suggestions. 
\bibliographystyle{plain}

\end{document}